\crefname{hypothesis}{Hypothesis}{Hypotheses}
\title{A subgradient algorithm for data-rate optimization in the remote state estimation problem\thanks{Submitted to the editors DATE.
\funding{This work was funded by the German Research Foundation (DFG) through the grant ZA 873/4-1.}}}
\author{Christoph Kawan\thanks{Institute of Informatics, LMU Munich, Oettingenstra{\ss}e 67, 80538 M\"{u}nchen, Germany (\email{christoph.kawan@lmu.de}).}
\and Sigurdur Hafstein\thanks{The Science Institute, University of Iceland, Dunhagi 5, 107 Reykjavik, Iceland
  (\email{shafstein@hi.is}).}
\and Peter Giesl\thanks{Department of Mathematics, University of Sussex, Falmer, BN1 9QH, United Kingdom  
  (\email{P.A.Giesl@sussex.ac.uk}).}}
\newcommand{\N}{\mathbb{N}}%
\newcommand{\R}{\mathbb{R}}%
\newcommand{\Z}{\mathbb{Z}}%
\newcommand{\trn}{^{\scriptscriptstyle \top}}%
\newcommand{\CC}{\mathcal{C}}%
\newcommand{\SC}{\mathcal{S}}%
\newcommand{\tm}{\times}%
\newcommand{\rme}{\mathrm{e}}%
\newcommand{\rmD}{\mathrm{D}}%
\newcommand{\rmO}{\mathrm{O}}%
\newcommand{\rmd}{\mathrm{d}}%
\newcommand{\inner}{\mathrm{int}}%
\newcommand{\res}{\mathrm{res}}%
\newcommand{\ep}{\varepsilon}%
\newcommand{\Gl}{\mathrm{GL}}%
\newcommand{\fa}{\mathfrak{a}}%
\newcommand{\tr}{\mathrm{tr}}%
\newcommand{\Diag}{\mathrm{Diag}}%
\begin{document}

\maketitle

\begin{abstract}
In the remote state estimation problem, an observer tries to reconstruct the state of a dynamical system at a remote location, where no direct sensor measurements are available. The observer only has access to information sent through a digital communication channel with a finite capacity. The recently introduced notion of restoration entropy provides a way to determine the smallest channel capacity above which an observer can be designed that observes the system without a degradation of the initial observation quality. In this paper, we propose a subgradient algorithm to estimate the restoration entropy via the computation of an appropriate Riemannian metric on the state space, which allows to determine the approximate value of the entropy from the time-one map (in the discrete-time case) or the generating vector field (for ODE systems), respectively.
\end{abstract}

\begin{keywords}
  Remote state estimation, restoration entropy, optimization on manifolds, subgradient algorithm, adapted metrics%
\end{keywords}

\begin{AMS}
  93B07, 93B53, 93B70%
\end{AMS}

\section{Introduction}

A timely and very active field of research within control theory is concerned with the analysis and design of networked control systems. Such systems are often composed of a large number of spatially distributed subsystems which share a common wireless communication network for information transfer. Prominent applications include: cooperative driving of connected cars, the coordinated flight of a swarm of drones, the control of unmanned surveillance and rescue submarines, and  robots  playing football. Designing communication and control protocols for such applications is a major challenge, because they violate some of the standard assumptions in classical control theory due to several imperfections. One of them is a limitation of the available data rate in the employed communication channels. In all practical applications, controllers first compute an estimate of the current state of the system before they determine a control action based on this estimate. These facts motivate the problem of the design of observers which receive sensory data over rate-limited channels. In particular, it is of interest to find the data-rate limit under which such observers can be designed.%

The problem of rate-limited state estimation (or observation) has been studied in \cite{WBr,TM1,TM2} for linear systems, and the minimal required data rate has been characterized as the sum of the logarithms of the unstable eigenvalues (see \cite{MSa,YBa} for comprehensive reviews of related results). For nonlinear deterministic systems, it has been shown in \cite{Sav} that the topological entropy of the system characterizes the rate above which the system can be observed with an arbitrarily small estimation error. Similar studies for nonlinear systems can be found in \cite{KY1,KY2,LMi}, where other variants of the state estimation problem have been studied.%

The analysis and numerical computations in this paper are based on \cite{MPo,MP2,KPM}. The main motivation for these contributions was that the topological entropy has a number of undesirable properties. In particular, the following problems with topological entropy characterize this quantity as a bad choice to base the implementation of control policies on it: (i) its severe non-robustness with respect to uncertain system parameters \cite{Mis} and (ii) the difficulties that come along with its numerical computation (see e.g.~\cite{COH,FJO,Aea}). The main achievement of \cite{MP2} consists in the introduction of a new entropy notion called \emph{restoration entropy}, characterizing the data-rate limit for so-called \emph{regular} or \emph{fine observability}, see \cite[Thm.~9]{MP2}. For the restoration entropy, an explicit formula in terms of the singular values of the linearized system is available \cite[Thm.~11]{MP2}. Moreover, associated observers operating arbitrarily close to the data-rate limit can be designed, which are robust with respect to uncertain parameters in the system.%

In this paper, we develop a numerical algorithm for the computation of restoration entropy and associated Riemannian metrics which can be used for the design of observers. To motivate our numerical algorithm, it is necessary to explain some technical details. Let us assume that the given dynamical system is discrete in time and given by%
\begin{equation*}
  x(t+1) = \phi(x(t)),\quad x(0) \in K,%
\end{equation*}
where $\phi:\R^n \rightarrow \R^n$ is a $C^1$-map and $K \subset \R^n$ a compact forward-invariant set with $K = \overline{\inner\, K}$. The restoration entropy of $\phi$ on $K$ satisfies%
\begin{equation}\label{eq_resent_form}
  h_{\res}(\phi,K) = \lim_{t \rightarrow \infty} \frac{1}{t} \max_{x\in K} \sum_{i=1}^n \max\{0,\log_2 \alpha_i(t,x)\},%
\end{equation}
where $\alpha_1(t,x),\ldots,\alpha_n(t,x)$ are the singular values of the Jacobian matrix $\rmD\phi^t(x)$. The evaluation of the temporal limit in \eqref{eq_resent_form} is closely related to the computation of Lyapunov exponents, which is known to be a difficult problem (see \cite{Sko} for a survey). One approach to the computation of the right-hand side in \eqref{eq_resent_form} is based on a reformulation presented in \cite{KPM} and additionally requires that the matrices $\rmD\phi(x)$, $x\in K$, are invertible. The main observations behind the results of \cite{KPM} are that \eqref{eq_resent_form} still holds if the ordinary singular values $\alpha_i(t,x)$ are replaced by singular values computed with respect to a Riemannian metric on $K$ (see Subsection \ref{subsec_discretetime} for a precise definition), and that the limit in $t$ can be replaced by the infimum over all $t>0$ (due to subadditivity). This leads to%
\begin{equation*}
  h_{\res}(\phi,K) = \inf_{t>0}\frac{1}{t} \max_{x\in K} \sum_{i=1}^n \max\{0,\log_2 \alpha_i^P(t,x)\},%
\end{equation*}
where $P$ is any Riemannian metric on $K$, i.e.~a continuous map from $K$ into the space $\SC^+_n$ of $n\tm n$ positive definite symmetric matrices, and $\alpha_i^P(t,x)$ denote the singular values of $\rmD\phi^t(x)$ computed with respect to $P$. In particular, this implies the estimate%
\begin{equation}\label{eq_resent_ub}
  h_{\res}(\phi,K) \leq \max_{x\in K} \sum_{i=1}^n \max\{0,\log_2 \alpha_i^P(1,x)\}.%
\end{equation}
Using differential-geometric methods introduced by Bochi and Navas in \cite{Boc,BNa}, it has been shown in \cite[Thm.~6]{KPM} that one can choose Riemannian metrics $P$ such that the right-hand side of \eqref{eq_resent_ub} approximates $h_{\res}(\phi,K)$ arbitrarily well:%
\begin{equation}\label{eq_resent_form2}
  h_{\res}(\phi,K) = \inf_P \max_{x\in K} \sum_{i=1}^n \max\{0,\log_2 \alpha_i^P(1,x)\}.%
\end{equation}
Hence, the computation of restoration entropy can be regarded as an infinite-dimensional optimization problem on the space of all Riemannian metrics on $K$. The paper at hand presents a subgradient algorithm designed for solving a constrained version of this optimization problem (and its continuous-time analogue), where we restrict the domain to the class of metrics conformal to a constant metric of the form $P(x) = \rme^{r(x)}p$ with $r(x)$ a polynomial of bounded degree and $p \in \SC^+_n$.%

The fact that this restricted problem can be solved via a subgradient algorithm is due to the observation that the function to be minimized is geodesically convex with respect to $(a,p) \in \R^N \tm \SC^+_n$, where $a$ is the coefficient vector of the polynomial $r(x) = r_a(x)$ and $N = \binom{d+n}{n}$ is the number of coefficients of the polynomial. Hence, we have to deal with a geodesically convex problem on the product space $\R^N \tm \SC^+_n$, which is a complete Riemannian manifold with non-positive sectional curvature, when $\SC^+_n$ is equipped with the standard trace metric \cite{Bha}. The classical subgradient algorithm has been extended to geodesically convex problems on Riemannian manifolds in \cite{FOl,Fea}, and corresponding convergence results have been proven. We have adapted and implemented the algorithm from \cite{FOl,Fea} to estimate the restoration entropy for different systems. The necessary theoretical work in this paper thus consists in a convexity proof for the objective function and the derivation of a formula for its subgradients (both for discrete- and continuous-time systems). The first of these tasks is heavily based on the Riemannian geometry of the space $\SC^+_n$, while the second one also relies on results about generalized derivatives of symmetric singular value or eigenvalue functions \cite{Lew,LSe}.%

We test our algorithm on three examples, two discrete-time systems and one continuous-time system: the H\'enon map with standard parameters, a bouncing ball system, and the Lorenz system with standard parameters. In all case studies, we obtain excellent results which are consistent with the existing theory.%

The paper is organized as follows: Section \ref{sec_prelim} introduces concepts and notation related to the geometry of the space of symmetric positive definite matrices. In Section \ref{sec_convexity}, the central convexity property necessary for the application of the subgradient algorithm is proved. The subsequent Section \ref{sec_reduction} explains how to derive a finite-dimensional optimization problem from the infinite-dimensional one, by restricting the domain to a class of conformal metrics, and how to formulate an associated subgradient algorithm. In Section \ref{sec_subgradient_comp}, explicit formulas for the subgradients in both the discrete- and the continuous-time case are presented. The examples are discussed in Section \ref{sec_examples}, and a list of problems for future research is presented in Section \ref{sec_future}.%

\subsection{Relations to other work}

A similar but simpler problem has been studied in \cite{TGD,Dea}, where the authors consider continuous-time systems given by an ODE%
\begin{equation*}
  \dot{x} = F(x),\quad F \in C^1(\R^n,\R^n),%
\end{equation*}
with associated flow $\phi^t(x)$, and compute the maximal value%
\begin{equation*}
  \bar{\Phi}^* := \max_{x\in K} \limsup_{t \rightarrow \infty} \frac{1}{t}\int_0^t \Phi(\phi^s(x))\, \rmd s%
\end{equation*}
on a compact invariant set $K$ for an observable $\Phi$. Via the variational formulation%
\begin{equation*}
  \bar{\Phi}^* = \inf_{V \in C^1(K)}\max_{x\in K}[\Phi(x) + f(x) \cdot \nabla V(x)],%
\end{equation*}
the maximization problem is transformed into a convex minimization problem on the infinite-dimensional space of $C^1$-functions on $K$, which is then turned into a finite-dimensional problem by restricting the search to SOS (sum of squared) polynomials with a degree bound. SOS programming has also been used to compute contraction metrics to show global stability of an equilibrium for systems with polynomial or rational dynamics \cite{APS}; this is related to our problem as we discuss below and in the conclusions.%

Our work is also related to the computation of extremal Lyapunov exponents. In fact, the number%
\begin{equation*}
  \lim_{t \rightarrow \infty} \frac{1}{t} \max_{x\in K} \sum_{i=1}^n \max\{0,\log \alpha_i(t,x)\}%
\end{equation*}
that we seek to compute, is the maximal Lyapunov exponent of a system induced by the given one on the exterior bundle over $K$. For theoretical results about the approximation of the full Lyapunov spectrum of a linear cocycle via adapted Riemannian metrics, we refer to \cite[Sec.~4]{Boc}. The ideas developed there have been the basis of the proof of formula \eqref{eq_resent_form2}.

Riemannian metrics have also been used to show exponential stability of equilibria and periodic orbits, and to determine subsets of their basins of attraction in the context of contraction metrics, which are tools to show incremental stability \cite{Hah,Leo,Loh,For}. For a Riemannian metric to be such a contraction metric for an equilibrium, the singular values of the linearized system are required to all be negative. For the explicit analytical or numerical computation of these contraction metrics, the restriction to conformal metrics of the form $P(x)=\rme^{r(x)}p$ is often considered.%

In \cite{HK}, an algorithm based on observations made in \cite{PMa} for the computation of an upper bound for the restoration entropy of continuous-time systems was developed. This algorithm used semidefinite optimization to parameterize a Riemannian metric $P:K\to \SC^+_n$ and a Lyapunov-type function $V:K\to\R$, which together deliver an upper bound on the restoration entropy.  Both $P$ and $V$ are continuous and affine on each simplex of triangulations of $K$, and can therefore be parameterized with a finite number of parameters.%

The algorithm achieved its goals in two steps. In the first step, a minimum number $\mu$ satisfying $\lambda_{\max}(x) \leq \mu$ for all $x\in K$ was determined, where $\lambda_{\max}(x)$ denotes the largest generalized eigenvalue of the pair $(A(x),P(x))$ with%
\begin{equation*}
  A(x) := P(x)\rmD F(x) + \rmD F(x)\trn P(x) + \dot P(x).%
\end{equation*}
In this step, a suitable metric $P$ is computed that is used in the second step.%

In the second step, the parameter $Q\in\R$ is minimized under the constraints%
\begin{equation*}
  A(x) - \mu(x)P(x) \preceq 0 \mbox{\quad and \quad} \dot{V}(x) + \widetilde{m}\mu(x) \leq Q\ \ \ \text{for all $x\in K$.}
\end{equation*}
Here, $\widetilde{m}$ is an upper bound on the number of positive generalized eigenvalues of the matrix pairs $(A(x),P(x))$ and the functions $\mu(\cdot)$ and $V(\cdot)$ are continuous and affine on each simplex of a triangulation of $K$. Note that the constant $\mu$ from the first step of the algorithm
serves as an upper bound for the function $\mu(\cdot)$ in this step. One can take $\widetilde{m}=n$, but if a better estimate is available, a lower upper bound $Q/(2\ln(2))$ on the restoration entropy is delivered.%

This algorithm was successfully applied to the Lorenz system, but because of its numerical complexity and lack in maturity of semidefinite solvers, in a simplified form. Indeed, a constant metric $P$ was computed using semidefinite optimization and then the second step could be reduced to a linear programming problem, cf.~\cite[Sec.~3.3]{HK}.%

Finally, further papers about the remote state estimation problem for nonlinear systems and, in particular, restoration entropy include \cite{Kaw,Mea,Vea,Ve2}.%

\section{Preliminaries}\label{sec_prelim}

\subsection{Notation and definitions}

By $\log$ we denote the base-$2$ logarithm. We let $\Z_+ = \{0,1,2,\ldots\}$ be the set of nonnegative integers. The notation $C^k(X,Y)$ is used for the space of all $C^k$-maps from $X$ to $Y$ if $X$ is (a subset of) a smooth manifold and $Y$ is another smooth manifold. We denote by $\SC_n$ the space of all $n\tm n$ real symmetric matrices, and by $\SC^+_n \subset \SC_n$ the subset of positive definite matrices. By $\rmO(n)$, we denote the orthogonal group of $\R^n$ and by $I$ the identity matrix of appropriate dimension.%

Let $M$ be a Riemannian manifold. We write $T_xM$ for the tangent space of $M$ at $x$ and $\langle \cdot,\cdot \rangle_x$ for the inner product on $T_xM$. A subset $C \subset M$ is called \emph{geodesically convex} if for every pair of points $x,y \in C$, there is a unique minimizing geodesic in $M$ joining $x$ and $y$, whose image is contained in $C$. A function $f:M \rightarrow \R$ is called \emph{geodesically convex} if $f \circ \gamma(\theta) \leq (1-\theta)f(\gamma(0)) + \theta f(\gamma(1))$ for every geodesic $\gamma:[0,1] \rightarrow M$ and all $\theta \in [0,1]$.%

\subsection{The space of positive matrices}

We recall some fundamental facts about the geometry of the space $\SC^+_n$. This space, equipped with the \emph{trace metric}%
\begin{equation*}
  \langle v,w \rangle_p := \tr(p^{-1}vp^{-1}w) \mbox{\quad for all\ } p \in \SC^+_n,\ v,w \in T_p\SC^+_n = \SC_n%
\end{equation*}
is a complete Riemannian manifold with non-positive sectional curvature (in fact, a \emph{Hadamard manifold} and also a \emph{symmetric space of non-compact type}), see \cite{Bha,BHa}.%

A first important fact is that the general linear group $\Gl(n,\R)$ acts transitively on $\SC^+_n$ by isometries via%
\begin{equation*}
  g \ast p := gpg\trn,\quad g \in \Gl(n,\R),\ p \in \SC^+_n.%
\end{equation*}

For each pair of points $p,q \in \SC^+_n$, there is a unique minimizing geodesic $\gamma_{pq}(\cdot)$ joining $p$ and $q$, that we always parametrize on $[0,1]$. We use the notation%
\begin{equation*}
  p \#_{\theta}\, q := \gamma_{pq}(\theta),\quad \theta \in [0,1]%
\end{equation*}
and recall that an explicit expression for the geodesic is (see \cite[Thm.~6.1.6]{Bha})%
\begin{equation}\label{eq_geod_form}
  p \#_{\theta}\, q = p^{\frac{1}{2}} [ p^{-\frac{1}{2}} q p^{-\frac{1}{2}} ]^{\theta} p^{\frac{1}{2}}.%
\end{equation}
The formula for the unique geodesic $\gamma_v$ with $\gamma_v(0) = p$ and $\dot{\gamma}_v(0) = v$ is given by%
\begin{equation*}
  \gamma_v(\theta) = p^{\frac{1}{2}} \exp(\theta p^{-\frac{1}{2}}v p^{-\frac{1}{2}}) p^{\frac{1}{2}}%
\end{equation*}
and can be shown by using \eqref{eq_geod_form}, observing that with $q := \gamma_v(1) = p^{\frac{1}{2}} \exp(p^{-\frac{1}{2}}v p^{-\frac{1}{2}}) p^{\frac{1}{2}}$ we obtain%
\begin{align*}
  p \#_{\theta}\, q &= p^{\frac{1}{2}} [ p^{-\frac{1}{2}} p^{\frac{1}{2}} \exp(p^{-\frac{1}{2}}v p^{-\frac{1}{2}}) p^{\frac{1}{2}}p^{-\frac{1}{2}} ]^{\theta} p^{\frac{1}{2}}
	           = p^{\frac{1}{2}} \exp(\theta p^{-\frac{1}{2}}v p^{-\frac{1}{2}}) p^{\frac{1}{2}} = \gamma_v(\theta).%
\end{align*}
Some properties of the geodesics on $\SC^+_n$ that we use are the following \cite{Bha}:%
\begin{itemize}
\item For any positive scalars $a,b$ and matrices $p,q\in\SC^+_n$:%
\begin{equation}\label{eq_s_geodesics}
  (ap) \#_{\theta}\, (bq) = a^{1-\theta} b^{\theta} (p \#_{\theta}\, q).%
\end{equation}
\item For all $g \in \Gl(n,\R)$ and $p,q\in\SC^+_n$:%
\begin{equation}\label{eq_geod_isom}
  g \ast (p \#_{\theta}\, q) = (g \ast p) \#_{\theta}\, (g \ast q).%
\end{equation}
\item For all $p,q \in \SC^+_n$:%
\begin{equation}\label{eq_geod_inv}
  (p \#_{\theta}\, q)^{-1} = p^{-1} \#_{\theta}\, q^{-1}.%
\end{equation}
\end{itemize}
Here, \eqref{eq_geod_isom} and \eqref{eq_geod_inv} directly follow from the facts that $\Gl(n,\R)$ acts on $\SC_n^+$ by isometries and also the matrix inversion, restricted to $\SC_n^+$, is an isometry.%

For any $g \in \Gl(n,\R)$, we let $\alpha_1(g) \geq \ldots \geq \alpha_n(g) > 0$ denote the singular values of $g$, i.e., the eigenvalues of the positive definite symmetric matrix $(gg\trn)^{\frac{1}{2}}$. We define%
\begin{equation*}
  \vec{\sigma}(g) := (\log \alpha_1(g),\ldots,\log \alpha_n(g)).%
\end{equation*}
The function $\vec{\sigma}:\Gl(n,\R) \rightarrow \R^n$ assumes values in the cone $\fa^+ := \{ \xi \in \R^n : \xi_1 \geq \ldots \geq \xi_n \}$ on which we define the partial order%
\begin{equation*}
  \xi \preceq \eta :\Leftrightarrow \left\{\begin{array}{rl}
	                                   \xi_1 + \ldots + \xi_k \leq \eta_1 + \ldots + \eta_k & \mbox{for } k = 1,\ldots,n-1 \\
																	   \xi_1 + \ldots + \xi_k = \eta_1 + \ldots + \eta_k & \mbox{for } k = n,%
																    \end{array}\right.%
\end{equation*}
where $\xi = (\xi_1,\ldots,\xi_n)$ and $\eta = (\eta_1,\ldots,\eta_n)$. From Horn's inequality \cite[Prop.~I.2.3.1]{BLR}, it follows that%
\begin{equation}\label{eq_horn}
  \vec{\sigma}(gh) \preceq \vec{\sigma}(g) + \vec{\sigma}(h) \mbox{\quad for all\ } g,h \in \Gl(n,\R).%
\end{equation}
Finally, we write $\vec{\chi}(g) := (\log |z_1|,\ldots,\log |z_n|)$ for any $g\in\Gl(n,\R)$, where $z_1,\ldots,z_n$ are the eigenvalues of $g$ ordered such that $|z_1| \geq \ldots \geq |z_n|$. By Weyl's inequality \cite[Prop.~I.2.3.3]{BLR}, we have%
\begin{equation}\label{eq_weyl}
  \vec{\chi}(g) \preceq \vec{\sigma}(g) \mbox{\quad for all\ } g \in \Gl(n,\R),%
\end{equation}
where we use that $\log|z_1| + \cdots + \log|z_n| = \log|\det g| = \log \alpha_1(g) + \cdots + \log \alpha_n(g)$.%

\subsection{Restoration entropy and the remote state estimation problem}

\begin{figure}[htbp]
	\begin{center}
		\includegraphics[width=10.00cm,height=2.10cm,angle=0]{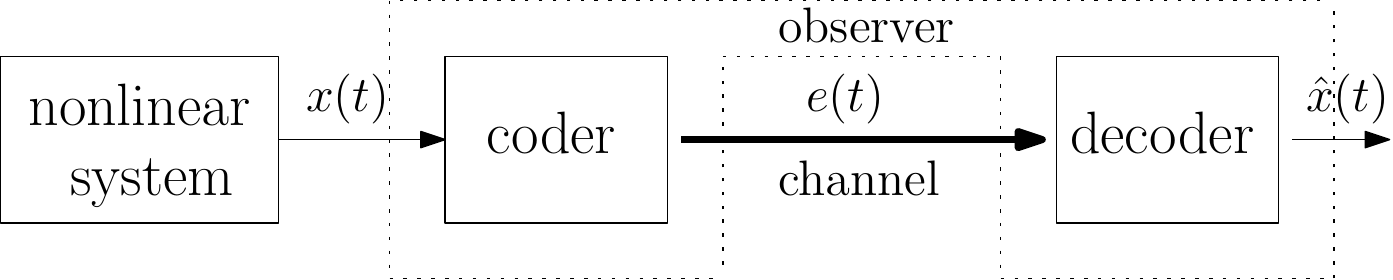}%
	\end{center}
	\caption{Observation at a remote location}\label{fig_obs}
\end{figure}

In this subsection, we briefly explain the remote state estimation problem, which motivates our numerical study. For simplicity, consider a discrete-time system%
\begin{equation}\label{eq_dt_system}
  x(t+1) = \phi(x(t)),\quad x(0) \in K,%
\end{equation}
where $\phi:\R^n \rightarrow \R^n$ is a $C^1$-map and $K \subset \R^n$ a compact forward-invariant set satisfying $K = \overline{\inner\, K}$. Figure \ref{fig_obs} depicts the setup we are interested in. Here, the true state $x(t)$, measured by sensors, is available to a coder and the map $\phi$ as well as an initial estimate $\hat{x}(0)$ are known to both coder and decoder. Based on the knowledge of $x(0),x(1),\ldots,x(t)$ as well as $\phi$ and $\hat{x}(0)$, the coder generates a symbol $e(t)$ from a finite coding alphabet at each time instant $t\in\Z_+$, and sends it over a digital channel to a decoder at a remote location, whose job is to produce an estimate $\hat{x}(t)$ of $x(t)$. The central question is at which data rate (measured in bits per unit of time) these symbols have to be transmitted so that the estimation error $\|x(t) - \hat{x}(t)\|$ can be made small, according to a specified estimation criterion. Here, we concentrate on the concept of \emph{regular observability}, introduced in \cite{MPo}. We say that the system is regularly observed if there are $\delta_*>0$, $G \geq 1$ such that for all $\delta \in (0,\delta_*]$, the implication%
\begin{equation*}
  \|x(0) - \hat{x}(0)\| \leq \delta \quad \Rightarrow \quad \sup_{t\geq0}\|x(t) - \hat{x}(t)\| \leq G\delta%
\end{equation*}
holds for every initial state $x(0) \in K$ and every initial estimate $\hat{x}(0) \in K$, which is known to both coder and decoder at time zero. The smallest information rate above which an observer, satisfying this criterion, can be designed, is given by the restoration entropy $h_{\res}(\phi,K)$ of system \eqref{eq_dt_system}. For the precise definitions of the information rate and  restoration entropy, we refer to \cite{MPo,MP2}. In this paper, we rely on the formula \eqref{eq_resent_form2}, derived in \cite{KPM}, and its continuous-time analogue. In \cite[App.~E]{MPo}, it is explained how a regular observer, associated with a Riemannian metric $P(\cdot)$ on $K$, is designed which operates over a channel whose capacity $c$ satisfies%
\begin{equation*}
  c > \max_{x\in K} \sum_{i=1}^n \max\{0,\log \alpha_i^P(x)\}.%
\end{equation*}
In fact, in \cite[App.~E]{MPo} only constant Riemannian metrics are considered, but the construction works analogously for arbitrary metrics. For practical purposes, it is important that the (analytical or numerical) description of the Riemannian metric allows for an easy approximation of the associated $\delta$-balls, since both coder and decoder have to compute coverings of possibly complicated sets with such balls in real time.%

\section{The central convexity property}\label{sec_convexity}

Let $K \subset \R^n$ be a compact set. If $P,Q \in C^0(K,\SC^+_n)$ are two Riemannian metrics on $K$ and $\theta\in[0,1]$, then $P \#_{\theta}\, Q$ denotes the Riemannian metric defined by%
\begin{equation*}
  (P \#_{\theta}\, Q)(x) := P(x) \#_{\theta}\, Q(x) \mbox{\quad for all\ } x\in K.%
\end{equation*}
Note that this binary operation preserves any regularity requirement on the involved metrics. That is, if both $P$ and $Q$ are of class $C^k$ with $k \in \Z_+ \cup \{\infty\}$, then so is $P \#_{\theta}\, Q$. Indeed, the mapping $\kappa(p,q) := p \#_{\theta}\, q$ from $\SC^+_n \tm \SC^+_n$ to $\SC^+_n$ is of class $C^{\infty}$, and $P \#_{\theta}\, Q = \kappa \circ (P \tm Q)$, where $(P \tm Q)(x) = (P(x),Q(x))$.%

A subset $\CC \subset C^0(K,\SC^+_n)$ will be called \emph{geodesically convex} if for any two $P,Q \in \CC$ it holds that $P \#_{\theta}\, Q \in \CC$ for all $\theta \in [0,1]$. If $\CC$ is geodesically convex, a function $f:\CC \rightarrow \R$ will be called \emph{geodesically convex} if it satisfies%
\begin{equation*}
  f(P \#_{\theta}\, Q) \leq (1 - \theta)f(P) + \theta f(Q)%
\end{equation*}
whenever $P,Q \in \CC$ and $\theta \in [0,1]$. Note that these concepts of geodesic convexity are not the usual ones (which we also use in this paper, namely in Proposition \ref{prop_J_convex} and all that follows), because $\theta \mapsto P \#_{\theta}\, Q$ might not be a geodesic with respect to any (infinite-dimensional) Riemannian metric.%

\subsection{The discrete-time case}\label{subsec_discretetime}

Let $\phi:\R^n \rightarrow \R^n$ be a $C^1$-map. We assume that $K \subset \R^n$ is a compact set satisfying the following properties:%
\begin{itemize}
\item $\phi(K) \subset K$, i.e., $K$ is forward-invariant.%
\item $K$ is the closure of its interior.%
\item The derivative $A(x) := \rmD\phi(x) \in \R^{n\tm n}$ is an invertible matrix for every $x\in K$.%
\end{itemize}
We study the dynamical system%
\begin{equation}\label{eq_dt_sys}
  x(t+1) = \phi(x(t)),\quad x(0) \in K%
\end{equation}
and our goal is to compute the restoration entropy $h_{\res}(\phi,K)$, which by \cite[Thm.~5]{KPM} satisfies%
\begin{equation*}
  h_{\res}(\phi,K) = \inf_{P \in C^0(K,\SC^+_n)} \max_{x\in K}\sum_{i=1}^n \max\{0,\log\alpha_i^P(x)\}.%
\end{equation*}

To this end, for any Riemannian metric $P \in C^0(K,\SC^+_n)$, we define%
\begin{equation*}
  \Sigma^P(x) := \sum_{i=1}^n \max\{0,\log \alpha_i^P(x)\} \mbox{\quad for all\ } x \in K,%
\end{equation*}
where $\alpha_1^P(x) \geq \ldots \geq \alpha_n^P(x) > 0$ are the eigenvalues of $[B(x)\trn B(x)]^{\frac{1}{2}}$ with%
\begin{equation*}
  B(x) := P(\phi(x))^{\frac{1}{2}}A(x)P(x)^{-\frac{1}{2}}.%
\end{equation*}
That is, $\alpha_i^P(x)$ are the ordinary singular values of $B(x)$ or the singular values of $A(x)$, regarded as a linear operator between the inner product spaces $(\R^n,\langle P(x) \cdot,\cdot\rangle)$ and $(\R^n,\langle P(\phi(x))\cdot,\cdot\rangle)$, see \cite[Lem.~5]{KPM}. Moreover, we put%
\begin{equation*}
  \Sigma(P) := \max_{x\in K}\Sigma^P(x) \mbox{\quad for all\ } P \in C^0(K,\SC^+_n).%
\end{equation*}

\begin{lemma}
For each $x\in K$, the functional $P \mapsto \Sigma^P(x)$ from $C^0(K,\SC^+_n)$ to $\R_+$ is continuous with respect to the uniform topology on its domain.
\end{lemma}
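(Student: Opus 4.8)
The plan is to fix $x \in K$ and show that $P \mapsto \Sigma^P(x)$ is continuous by decomposing it into a chain of continuous maps. Observe first that the value $\Sigma^P(x)$ depends on $P$ only through the pair $(P(x), P(\phi(x))) \in \SC^+_n \tm \SC^+_n$, and that the evaluation map $P \mapsto (P(x), P(\phi(x)))$ from $(C^0(K,\SC^+_n), \|\cdot\|_\infty)$ to $\SC^+_n \tm \SC^+_n$ is clearly continuous (indeed $1$-Lipschitz in the sup-norm on each component). So it suffices to prove that the finite-dimensional map
\begin{equation*}
  (p, q) \longmapsto \sum_{i=1}^n \max\{0, \log \alpha_i(B)\}, \qquad B := q^{\frac{1}{2}} A(x) p^{-\frac{1}{2}},
\end{equation*}
is continuous on $\SC^+_n \tm \SC^+_n$.

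For this, I would string together the following continuous maps. The matrix square root $p \mapsto p^{\frac{1}{2}}$ is continuous (indeed $C^\infty$) on $\SC^+_n$, as is $p \mapsto p^{-\frac{1}{2}}$; hence $(p,q) \mapsto B = q^{\frac{1}{2}} A(x) p^{-\frac{1}{2}}$ is continuous from $\SC^+_n \tm \SC^+_n$ into $\Gl(n,\R)$ (note $B$ is invertible since $A(x)$ is, by the standing assumption on $K$). Next, the map sending a matrix $B$ to the ordered tuple of singular values $(\alpha_1(B), \ldots, \alpha_n(B))$ is continuous: the $\alpha_i(B)$ are the ordered eigenvalues of $(B B\trn)^{\frac{1}{2}}$, and ordered eigenvalues of a symmetric matrix depend continuously on the matrix (e.g.\ by Weyl's perturbation inequality, or simply because the roots of the characteristic polynomial depend continuously on its coefficients). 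Finally, $t \mapsto \max\{0, \log t\}$ is continuous on $(0,\infty)$, and a finite sum of compositions of continuous functions is continuous. Composing, $(p,q) \mapsto \sum_{i=1}^n \max\{0,\log\alpha_i(B)\}$ is continuous, and precomposing with the evaluation map gives the claim.

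I do not expect any genuine obstacle here; the statement is a routine composition-of-continuous-maps argument, and the only points that require a word of justification are (i) invertibility of $B(x)$, which is exactly why the assumption ``$A(x)$ invertible for all $x \in K$'' was imposed, so that $\log \alpha_i^P(x)$ is well defined and finite, and (ii) continuity of the ordered-eigenvalue map, which is classical. One could alternatively phrase the argument slightly more abstractly by noting that $\Sigma^P(x)$ equals $\sum_{i=1}^n \max\{0, \log \alpha_i^{P(x)}(A(x))\}$ where $\alpha_i^p$ denotes singular values with respect to the inner products induced by the metrics, but the concrete matrix-analytic route above is the cleanest to write out.
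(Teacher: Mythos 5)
Your proposal is correct and follows essentially the same route as the paper's proof: both reduce to the facts that $\Sigma^P(x)$ depends on $P$ only through $(P(x),P(\phi(x)))$, that $(p,q)\mapsto q^{\frac{1}{2}}A(x)p^{-\frac{1}{2}}$ is continuous, and that singular values (hence $\sum_i\max\{0,\log\alpha_i\}$) depend continuously on the matrix. The paper merely phrases this as an $\ep$--$\delta$ argument, while you phrase it as a composition of continuous maps; the content is the same.
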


\begin{proof}
Fix $P \in C^0(K,\SC^+_n)$ and let $\ep>0$. We have to show that there exists $\delta>0$ such that%
\begin{equation*}
  \sup_{z\in K}\|P(z) - Q(z)\| \leq \delta \quad \Rightarrow \quad |\Sigma^P(x) - \Sigma^Q(x)| \leq \ep.%
\end{equation*}
First note that by continuous dependence of the singular values on the matrix, we can choose $\alpha>0$ small enough such that%
\begin{align}\label{eq_alphaep}
  \|P(\phi(x))^{\frac{1}{2}}A(x)P(x)^{-\frac{1}{2}} - Q(\phi(x))^{\frac{1}{2}}A(x)Q(x)^{-\frac{1}{2}}\| \leq \alpha \Rightarrow |\Sigma^P(x) - \Sigma^Q(x)| \leq \ep.%
\end{align}
Choosing $\delta$ small enough leads to $\max\{\|P(x) - Q(x)\|,\|P(\phi(x)) - Q(\phi(x))\|\}$ being as small as desired. Since the matrix $\hat{q}^{\frac{1}{2}}A(x)q^{-\frac{1}{2}}$ depends continuously on $(q,\hat{q}) \in \SC^+_n \tm \SC^+_n$, therefore the assumption in \eqref{eq_alphaep} can be satisfied.
\end{proof}

The following lemma is the key result of this paper, which enables us to formulate our optimization problem as a convex problem.%

\begin{lemma}\label{lem_convexity_dt}
For all $P,Q \in C^0(K,\SC^+_n)$, every $x \in K$ and $\theta \in [0,1]$, it holds that%
\begin{align}\label{eq_convexity}
\begin{split}
 &\vec{\sigma}( [P(\phi(x)) \#_{\theta}\, Q(\phi(x))]^{\frac{1}{2}}A(x)[P(x) \#_{\theta}\, Q(x)]^{-\frac{1}{2}} ) \\
	&\preceq (1 - \theta)\vec{\sigma}( P(\phi(x))^{\frac{1}{2}}A(x)P(x)^{-\frac{1}{2}} ) + \theta\vec{\sigma}(Q(\phi(x))^{\frac{1}{2}}A(x)Q(x)^{-\frac{1}{2}} ).%
\end{split}
\end{align}
As a consequence, $P \mapsto \Sigma^P(x)$ is geodesically convex, i.e.%
\begin{equation*}
  \Sigma^{P \#_{\theta}\, Q}(x) \leq (1 - \theta)\Sigma^P(x) + \theta\Sigma^Q(x).%
\end{equation*}
\end{lemma}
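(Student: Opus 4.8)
The plan is to prove the majorization inequality \eqref{eq_convexity} first, since the geodesic convexity of $P \mapsto \Sigma^P(x)$ is then a routine consequence: the function $\xi \mapsto \sum_{i=1}^n \max\{0,\xi_i\}$ on the cone $\fa^+$ is nondecreasing with respect to the order $\preceq$ and convex, so applying it to both sides of \eqref{eq_convexity} and using convexity yields $\Sigma^{P \#_\theta Q}(x) \leq (1-\theta)\Sigma^P(x) + \theta\Sigma^Q(x)$. (One should note that $\sum_i \max\{0,\xi_i\} = \max_{1 \le k \le n}\sum_{i=1}^k \xi_i$ with the convention that the empty sum is $0$, which makes both monotonicity in $\preceq$ and convexity transparent.) So the entire content is in the first display.

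To prove \eqref{eq_convexity}, fix $x \in K$ and abbreviate $p := P(x)$, $q := Q(x)$, $\hat p := P(\phi(x))$, $\hat q := Q(\phi(x))$, and $A := A(x) \in \Gl(n,\R)$. The left-hand side is $\vec\sigma\big((\hat p \#_\theta \hat q)^{1/2} A (p \#_\theta q)^{-1/2}\big)$. I would write $(\hat p \#_\theta \hat q)^{1/2} A (p \#_\theta q)^{-1/2}$ as a product in a way that exposes the geodesic structure. The natural route: set $g := \hat p^{1/2}$ and $h := p^{1/2}$, so that $\hat p^{1/2} A p^{-1/2} = g A h^{-1}$ has singular values $\vec\sigma(\hat p^{1/2} A p^{-1/2})$, and similarly for $q,\hat q$. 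Using the $\Gl(n,\R)$-action and \eqref{eq_geod_isom}, one can transport the geodesic: $\hat p \#_\theta \hat q = g \ast (I \#_\theta (g^{-1}\ast \hat q)) = g \ast (I \#_\theta (\hat p^{-1/2}\hat q \hat p^{-1/2}))$, and by \eqref{eq_geod_form} the geodesic from $I$ is simply $I \#_\theta m = m^\theta$. Thus $\hat p \#_\theta \hat q = \hat p^{1/2}(\hat p^{-1/2}\hat q\hat p^{-1/2})^\theta \hat p^{1/2}$, and likewise $p \#_\theta q = p^{1/2}(p^{-1/2}qp^{-1/2})^\theta p^{1/2}$. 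Writing $U := \hat p^{-1/2}\hat q\hat p^{-1/2} \in \SC^+_n$ and $W := p^{-1/2}qp^{-1/2}\in\SC^+_n$, the left-hand matrix becomes (up to a left/right orthogonal factor, which does not affect singular values) comparable to $U^{\theta/2}\,\hat p^{1/2} A p^{-1/2}\, W^{-\theta/2}$. Since $\vec\sigma(U^{\theta/2}) = \theta\,\vec\sigma(\hat p^{-1/2}\hat q\hat p^{-1/2})/2 \cdot 2$... — more precisely $\vec\sigma(U^{\theta}) = \theta\,\vec\sigma(U)$ because $U$ is positive definite symmetric — Horn's inequality \eqref{eq_horn} gives
\begin{equation*}
  \vec\sigma\big(U^{\theta/2}(\hat p^{1/2}Ap^{-1/2})W^{-\theta/2}\big) \preceq \tfrac{\theta}{2}\vec\sigma(U) + \vec\sigma(\hat p^{1/2}Ap^{-1/2}) - \tfrac{\theta}{2}\vec\sigma(W),
\end{equation*}
so the remaining task is to identify $\vec\sigma(\hat p^{1/2}Ap^{-1/2}) + \tfrac{\theta}{2}(\vec\sigma(U) - \vec\sigma(W))$ with the right-hand side of \eqref{eq_convexity}. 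Here $\vec\sigma(U) = \vec\sigma(\hat p^{-1/2}\hat q\hat p^{-1/2})$ records the "difference" between $\hat p$ and $\hat q$ along the geodesic, and the point is that $\vec\sigma(\hat q^{1/2}Ap'^{-1/2})$-type expressions combine correctly; I expect the clean way to see this is to symmetrize: apply the same bound with the roles arranged so that $\hat p^{1/2}Ap^{-1/2}$ is split as $\hat q^{-\theta/2}\hat p^{...}$ appropriately, and then average, OR — more elegantly — observe directly that $(\hat p \#_\theta \hat q)^{1/2} = (\hat p \#_\theta \hat q)^{1/2}$ factors via the two endpoints with exponents $1-\theta$ and $\theta$ and feed that into one application of \eqref{eq_horn} together with $\vec\sigma(r^s) = s\,\vec\sigma(r)$ for $r \in \SC^+_n$, $s \ge 0$.

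The cleanest packaging, which I would adopt in the writeup: let $C_P := \hat p^{1/2}Ap^{-1/2}$ and $C_Q := \hat q^{1/2}Aq^{-1/2}$, and note $\hat q^{1/2} = \hat q^{1/2}\hat p^{-1/2}\hat p^{1/2}$ so $C_Q = (\hat q^{1/2}\hat p^{-1/2})C_P(p^{1/2}q^{-1/2})$; then the left-hand matrix of \eqref{eq_convexity} equals $(\hat p\#_\theta\hat q)^{1/2}\hat p^{-1/2}\cdot C_P\cdot p^{1/2}(p\#_\theta q)^{-1/2}$, and using \eqref{eq_geod_form} plus $\vec\sigma(r^\theta)=\theta\vec\sigma(r)$ one checks that $\vec\sigma\big((\hat p\#_\theta\hat q)^{1/2}\hat p^{-1/2}\big) = \tfrac{\theta}{2}\vec\sigma(\hat p^{-1/2}\hat q\hat p^{-1/2}) = \tfrac{\theta}{2}\big(\vec\sigma(C_Q)-\vec\sigma(C_P)\big)$ after relating $\hat p^{-1/2}\hat q\hat p^{-1/2}$ and $p^{-1/2}qp^{-1/2}$ through $C_P,C_Q$ — this last identification, combined with applying \eqref{eq_horn} twice (once on each side factor) and collecting terms, should land exactly on $(1-\theta)\vec\sigma(C_P)+\theta\vec\sigma(C_Q)$.

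The main obstacle I anticipate is precisely this last bookkeeping step: getting the exponents $1-\theta$ and $\theta$ to emerge with the correct signs when the "correction matrices" $(\hat p\#_\theta\hat q)^{1/2}\hat p^{-1/2}$ and $p^{1/2}(p\#_\theta q)^{-1/2}$ are, individually, not simply powers of a single positive matrix (they involve a product of two non-commuting positive factors). The resolution is to avoid splitting into two separate Horn applications and instead apply \eqref{eq_horn} once to a three-factor product where the outer two factors are $\vec\sigma$-controlled by $\theta$ and $1-\theta$ multiples of $\vec\sigma(C_P)$, $\vec\sigma(C_Q)$ respectively; invariance of $\vec\sigma$ under the $\Gl(n,\R)$-action by $g \ast p = gpg\trn$ on the relevant positive matrices (equivalently, $\vec\sigma(gpg\trn)$ handled via $\vec\sigma$ of $g$ and $p$) is the tool that makes the two endpoint contributions symmetric. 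Once the algebra is arranged so that a single invocation of \eqref{eq_horn} suffices, the rest is immediate.
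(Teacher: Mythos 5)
Your first paragraph (deducing the convexity of $\Sigma^P(x)$ from \eqref{eq_convexity} via monotonicity and convexity of $\xi\mapsto\max_{0\le k\le n}\sum_{i\le k}\xi_i$ on $(\fa^+,\preceq)$) is fine and matches the paper. The gap is in the core majorization argument. Writing $p:=P(x)$, $\hat p:=P(\phi(x))$, etc., $C_P:=\hat p^{1/2}Ap^{-1/2}$, $C_Q:=\hat q^{1/2}Aq^{-1/2}$, your computation $\vec\sigma\bigl((\hat p\#_{\theta}\,\hat q)^{1/2}\hat p^{-1/2}\bigr)=\tfrac{\theta}{2}\vec\sigma(\hat p^{-1/2}\hat q\hat p^{-1/2})=\theta\,\vec\sigma(\hat q^{1/2}\hat p^{-1/2})$ is correct, but the further identification of this with $\tfrac{\theta}{2}\bigl(\vec\sigma(C_Q)-\vec\sigma(C_P)\bigr)$ is false: take $\hat p=\hat q$ and $p\neq q$, then the left side vanishes while $\vec\sigma(C_Q)\neq\vec\sigma(C_P)$ in general. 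Worse, the overall strategy cannot be repaired by "one clever Horn application." From the factorization $\mathrm{LHS}=\bigl[(\hat p\#_{\theta}\,\hat q)^{1/2}\hat p^{-1/2}\bigr]\,C_P\,\bigl[p^{1/2}(p\#_{\theta}\,q)^{-1/2}\bigr]$, Horn gives $\vec\sigma(\mathrm{LHS})\preceq \vec\sigma(C_P)+\theta\,\vec\sigma(\hat q^{1/2}\hat p^{-1/2})+\theta\,\vec\sigma(p^{1/2}q^{-1/2})$; for this to imply the target $(1-\theta)\vec\sigma(C_P)+\theta\vec\sigma(C_Q)$ you would need $\vec\sigma(\hat q^{1/2}\hat p^{-1/2})+\vec\sigma(C_P)+\vec\sigma(p^{1/2}q^{-1/2})\preceq\vec\sigma(C_Q)$, which is exactly the reverse of what Horn yields for $C_Q=(\hat q^{1/2}\hat p^{-1/2})\,C_P\,(p^{1/2}q^{-1/2})$, and so holds only in degenerate equality cases. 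Submultiplicativity of singular values alone loses too much here; the correction factors must be handled \emph{exactly}, not estimated.

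This is precisely where the paper's proof differs, and the two missing ingredients are: (i) the reduction to $\theta=\tfrac12$ (midpoint convexity plus the dyadic/continuity argument of Remark \ref{rem_conv_arg}), and (ii) the use of Weyl's inequality $\vec\chi\preceq\vec\sigma$ together with the similarity invariance of eigenvalues. In the paper, after normalizing by the isometry $g(x)=P(x)^{1/2}$, one Weyl step and one Horn step split off $\vec\sigma(C_P)$ with weight $\tfrac12$, and the remaining factor's singular spectrum is then computed \emph{exactly} to equal $\vec\sigma(C_Q)$ by passing to $\vec\chi$ and cyclically permuting factors, using the midpoint identity $[\hat p\#_{\frac12}\,\hat q]\,\hat p^{-1}\,[\hat p\#_{\frac12}\,\hat q]=\hat q$. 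It is this exact collapse, available only at $\theta=\tfrac12$ (the exponents $\theta$ and $1-\theta$ do not recombine for general $\theta$), that replaces the false bookkeeping step in your sketch. As it stands, your proposal identifies the right objects but does not prove the inequality; to complete it you would essentially have to reproduce the $\vec\chi$/Weyl argument and the $\theta=\tfrac12$ reduction.
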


\begin{proof}
The identity%
\begin{equation*}
  \Sigma^P(x) = \max_{0 \leq k \leq n}\sum_{i=1}^k \log\alpha_i^P(x),%
\end{equation*}
where $\sum_{i=1}^0 \ldots := 0$, implies that it suffices to prove the relation \eqref{eq_convexity} for geodesic convexity of $P \mapsto \Sigma^P(x)$. By a standard convexity argument (see Remark \ref{rem_conv_arg} for details), it suffices to prove this for $\theta=\frac{1}{2}$. To this end, let $g(x) := P(x)^{\frac{1}{2}}$, which yields $g(x)^{-1} \ast P(x) \equiv I$. Then, putting $Q'(x) := g(x)^{-1} \ast Q(x)$, we obtain%
\begin{align*}
  &\vec{\sigma}\bigl( [P(\phi(x)) \#_{\frac{1}{2}}\, Q(\phi(x))]^{\frac{1}{2}}A(x)[P(x) \#_{\frac{1}{2}}\, Q(x)]^{-\frac{1}{2}} \bigr) \\
	&= \vec{\sigma}\bigl( [(g(\phi(x)) \ast I) \#_{\frac{1}{2}}\, Q(\phi(x))]^{\frac{1}{2}} A(x) [ (g(x) \ast I) \#_{\frac{1}{2}}\, Q(x)]^{-\frac{1}{2}} \bigr) \\
	&\stackrel{\eqref{eq_geod_isom}}{=} \vec{\sigma}\bigl( [g(\phi(x)) \ast (I \#_{\frac{1}{2}}\, Q'(\phi(x)))]^{\frac{1}{2}} A(x) [g(x) \ast (I \#_{\frac{1}{2}}\, Q'(x))]^{-\frac{1}{2}} \bigr) \\
	&\stackrel{\eqref{eq_geod_form}}{=} \vec{\sigma}\bigl( [g(\phi(x)) \ast Q'(\phi(x))^{\frac{1}{2}}]^{\frac{1}{2}} A(x) [g(x) \ast Q'(x)^{\frac{1}{2}}]^{-\frac{1}{2}} \bigr) \\
	&= \frac{1}{2} \vec{\chi}\bigl( [g(x) \ast Q'(x)^{\frac{1}{2}}]^{-\frac{1}{2}} A(x)\trn [g(\phi(x)) \ast Q'(\phi(x))^{\frac{1}{2}}] A(x) [g(x) \ast Q'(x)^{\frac{1}{2}}]^{-\frac{1}{2}} \bigr) \\
	&= \frac{1}{2} \vec{\chi}\bigl( [g(x) \ast Q'(x)^{\frac{1}{2}}]^{-1} A(x)\trn [g(\phi(x)) \ast Q'(\phi(x))^{\frac{1}{2}}] A(x) \bigr) \\
  &= \frac{1}{2} \vec{\chi}\bigl( P(x)^{-\frac{1}{2}} Q'(x)^{-\frac{1}{2}} P(x)^{-\frac{1}{2}} A(x)\trn P(\phi(x))^{\frac{1}{2}} Q'(\phi(x))^{\frac{1}{2}} P(\phi(x))^{\frac{1}{2}} A(x) \bigr) \\
	&= \frac{1}{2} \vec{\chi}\bigl( Q'(x)^{-\frac{1}{2}} P(x)^{-\frac{1}{2}} A(x)\trn P(\phi(x))^{\frac{1}{2}} Q'(\phi(x))^{\frac{1}{2}} P(\phi(x))^{\frac{1}{2}} A(x) P(x)^{-\frac{1}{2}} \bigr) \\
	&= \frac{1}{2} \vec{\chi}\bigl( P(x)^{-\frac{1}{2}} A(x)\trn P(\phi(x))^{\frac{1}{2}} Q'(\phi(x))^{\frac{1}{2}} P(\phi(x))^{\frac{1}{2}} A(x) P(x)^{-\frac{1}{2}} Q'(x)^{-\frac{1}{2}} \bigr) \\
	&\stackrel{\eqref{eq_weyl}}{\preceq} \frac{1}{2}\vec{\sigma}\bigl( [P(x)^{-\frac{1}{2}} A(x)\trn P(\phi(x))^{\frac{1}{2}}] \cdot [Q'(\phi(x))^{\frac{1}{2}} P(\phi(x))^{\frac{1}{2}} A(x) P(x)^{-\frac{1}{2}} Q'(x)^{-\frac{1}{2}}] \bigr) \\
	&\stackrel{\eqref{eq_horn}}{\preceq} \frac{1}{2}\vec{\sigma}\bigl( P(\phi(x))^{\frac{1}{2}} A(x) P(x)^{-\frac{1}{2}} \bigr) + \frac{1}{2}\vec{\sigma}\bigl(Q'(\phi(x))^{\frac{1}{2}} P(\phi(x))^{\frac{1}{2}} A(x) P(x)^{-\frac{1}{2}} Q'(x)^{-\frac{1}{2}}\bigr).%
\end{align*}
The last summand (without the factor $\frac{1}{2}$ in front) can be written as%
\begin{align*}
  & \vec{\sigma}\bigl( [P(\phi(x))^{-\frac{1}{2}} Q(\phi(x)) P(\phi(x))^{-\frac{1}{2}}]^{\frac{1}{2}}P(\phi(x))^{\frac{1}{2}}A(x) P(x)^{-\frac{1}{2}}[P(x)^{-\frac{1}{2}}Q(x)P(x)^{-\frac{1}{2}}]^{-\frac{1}{2}} \bigr) \\
	&= \vec{\sigma}\bigl( P(\phi(x))^{-\frac{1}{2}}[P(\phi(x)) \#_{\frac{1}{2}}\, Q(\phi(x))] A(x) [P(x)^{-1} \#_{\frac{1}{2}}\, Q(x)^{-1}] P(x)^{\frac{1}{2}} \bigr).%
\end{align*}
We now introduce the following abbreviations:%
\begin{equation*}
  p := P(x), \quad q := Q(x), \quad \hat{p} := P(\phi(x)), \quad \hat{q} := Q(\phi(x)), \quad a := A(x).%
\end{equation*}
Then, using \eqref{eq_geod_inv}, we can write the last term above as%
\begin{align*}
  &\vec{\sigma}\bigl( \hat{p}^{-\frac{1}{2}} [\hat{p} \#_{\frac{1}{2}}\, \hat{q}]  a [p \#_{\frac{1}{2}}\, q]^{-1} p^{\frac{1}{2}} \bigr) \\
	&= \frac{1}{2}\vec{\chi}\bigl(p^{\frac{1}{2}} [p \#_{\frac{1}{2}}\, q]^{-1} a\trn [\hat{p} \#_{\frac{1}{2}}\, \hat{q}] \hat{p}^{-\frac{1}{2}} \hat{p}^{-\frac{1}{2}} [\hat{p} \#_{\frac{1}{2}}\, \hat{q}]  a [p \#_{\frac{1}{2}}\, q]^{-1} p^{\frac{1}{2}}\bigr).%
\end{align*}
Observe that%
\begin{align*}
  [\hat{p} \#_{\frac{1}{2}}\, \hat{q}] \hat{p}^{-\frac{1}{2}} \hat{p}^{-\frac{1}{2}} [\hat{p} \#_{\frac{1}{2}}\, \hat{q}] = \hat{p}^{\frac{1}{2}} [ \hat{p}^{-\frac{1}{2}} \hat{q} \hat{p}^{-\frac{1}{2}} ]^{\frac{1}{2}} [\hat{p}^{-\frac{1}{2}} \hat{q}\hat{p}^{-\frac{1}{2}}]^{\frac{1}{2}} \hat{p}^{\frac{1}{2}} = \hat{p}^{\frac{1}{2}} \hat{p}^{-\frac{1}{2}} \hat{q} \hat{p}^{-\frac{1}{2}} \hat{p}^{\frac{1}{2}} = \hat{q}.%
\end{align*}
Hence,%
\begin{align*}
&\vec{\sigma}\bigl( \hat{p}^{-\frac{1}{2}} [\hat{p} \#_{\frac{1}{2}}\, \hat{q}]  a [p \#_{\frac{1}{2}}\, q]^{-1} p^{\frac{1}{2}} \bigr) = \frac{1}{2}\vec{\chi}\bigl(p^{\frac{1}{2}} [p \#_{\frac{1}{2}}\, q]^{-1} a\trn \hat{q}  a [p \#_{\frac{1}{2}}\, q]^{-1} p^{\frac{1}{2}} \bigr) \\
	&= \frac{1}{2}\vec{\chi}\bigl( [ p^{\frac{1}{2}} q^{-1} p^{\frac{1}{2}} ]^{\frac{1}{2}} p^{-\frac{1}{2}} a\trn \hat{q} a p^{-\frac{1}{2}} [ p^{\frac{1}{2}} q^{-1} p^{\frac{1}{2}} ]^{\frac{1}{2}} \bigr) \\
	&= \frac{1}{2}\vec{\chi}\bigl( p^{\frac{1}{2}} q^{-1} p^{\frac{1}{2}} p^{-\frac{1}{2}}a\trn \hat{q} a p^{-\frac{1}{2}} \bigr) \\
	&= \frac{1}{2}\vec{\chi}\bigl( q^{-1} a\trn \hat{q} a \bigr) \\
	&= \frac{1}{2}\vec{\chi}\bigl( q^{-\frac{1}{2}} q^{-\frac{1}{2}} a\trn \hat{q}^{\frac{1}{2}} \hat{q}^{\frac{1}{2}} a \bigr) \\
	&= \frac{1}{2}\vec{\chi}\bigl( [q^{-\frac{1}{2}} a\trn \hat{q}^{\frac{1}{2}} ] \cdot [\hat{q}^{\frac{1}{2}} a q^{-\frac{1}{2}} ] \bigr) \\
	&= \frac{1}{2}\vec{\chi}\bigl( [\hat{q}^{\frac{1}{2}} a q^{-\frac{1}{2}}]\trn \cdot [\hat{q}^{\frac{1}{2}} a q^{-\frac{1}{2}} ] \bigr) \\
	&= \vec{\sigma}\bigl( \hat{q}^{\frac{1}{2}} a q^{-\frac{1}{2}} \bigr) = \vec{\sigma}\bigl( Q(\phi(x))^{\frac{1}{2}} A(x) Q(x)^{-\frac{1}{2}} \bigr),
\end{align*}
which yields the desired inequality.
\end{proof}

\begin{remark}\label{rem_conv_arg}
Recall the argument showing that it suffices to check the convexity condition for $\theta = \frac{1}{2}$: If a functional $J$ on $C^0(K,\SC^+_n)$ satisfies $J(P \#_{\frac{1}{2}}\, Q) \leq \frac{1}{2}J(P) + \frac{1}{2}J(Q)$ for all $P,Q$, then%
\begin{align*}
  J(P \#_{\frac{1}{4}}\, Q) &= J(P \#_{\frac{1}{2}}\, [P \#_{\frac{1}{2}}\, Q])
	\leq \frac{1}{2}J(P) + \frac{1}{2}J(P \#_{\frac{1}{2}}\, Q) \\
	&\leq \frac{1}{2}J(P) + \frac{1}{2}\left(\frac{1}{2}J(P) + \frac{1}{2}J(Q)\right) = \frac{3}{4}J(P) + \frac{1}{4}J(Q).%
\end{align*}
In a similar fashion, we can can verify the convexity condition for every number of the form $\theta = \frac{k}{2^n}$ with $n$ a positive integer and $0 \leq k \leq 2^n$. For all other values of $\theta \in [0,1]$ it follows by continuity, since the set $\{\frac{k}{2^n}\}$ is dense in $[0,1]$. Here we use that $P \mapsto J(P) := \Sigma^P(x)$ (for fixed $x$) is continuous with respect to the uniform topology on $C^0(K,\SC^+_n)$ and also $\theta \mapsto P \#_{\theta}\, Q$ is continuous.%
\end{remark}

\begin{corollary}
The functional $\Sigma:C^0(K,\SC^+_n) \rightarrow \R_+$ satisfies%
\begin{equation*}
  \Sigma(P \#_{\theta}\, Q) \leq (1-\theta)\Sigma(P) + \theta\Sigma(Q)%
\end{equation*}
for all $P,Q \in C^0(K,\SC^+_n)$ and $\theta \in [0,1]$. That is, $\Sigma$ is geodesically convex.
\end{corollary}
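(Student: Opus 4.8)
The plan is to deduce this directly from Lemma~\ref{lem_convexity_dt} by taking the maximum over $x \in K$. Fix $P,Q \in C^0(K,\SC^+_n)$ and $\theta \in [0,1]$. The starting point is the pointwise inequality established in Lemma~\ref{lem_convexity_dt}, namely
\begin{equation*}
  \Sigma^{P \#_{\theta}\, Q}(x) \leq (1-\theta)\Sigma^P(x) + \theta\Sigma^Q(x) \mbox{\quad for every\ } x \in K.
\end{equation*}

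The key step is then to bound each term on the right by its maximum: since $\Sigma^P(x) \leq \Sigma(P)$ and $\Sigma^Q(x) \leq \Sigma(Q)$ for all $x$, the right-hand side is at most $(1-\theta)\Sigma(P) + \theta\Sigma(Q)$, a constant independent of $x$. Hence
\begin{equation*}
  \Sigma^{P \#_{\theta}\, Q}(x) \leq (1-\theta)\Sigma(P) + \theta\Sigma(Q) \mbox{\quad for all\ } x \in K,
\end{equation*}
and taking the maximum over $x \in K$ on the left yields $\Sigma(P \#_{\theta}\, Q) \leq (1-\theta)\Sigma(P) + \theta\Sigma(Q)$, which is the claim. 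One should note in passing that all the maxima over $K$ in question are attained (rather than merely suprema): $K$ is compact and, by continuity of the singular values with respect to the matrix together with continuity of $\phi$, $A$, and $P$, the function $x \mapsto \Sigma^P(x)$ is continuous on $K$; but the argument above works verbatim with suprema in place of maxima if one prefers to avoid this remark.

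I do not expect any genuine obstacle here: the content is entirely contained in Lemma~\ref{lem_convexity_dt}, and passing from a pointwise convexity inequality to a convexity inequality for the supremum is a standard and elementary manipulation (the supremum of a family of convex functions is convex). The only point requiring a word of care is ensuring the geodesic operation is well defined at the level of metrics, i.e.\ that $P \#_{\theta}\, Q$ is again a continuous map $K \to \SC^+_n$, but this has already been recorded at the beginning of Section~\ref{sec_convexity}.
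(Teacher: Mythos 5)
Your argument is correct and is exactly the intended one: the paper states this corollary without proof as an immediate consequence of Lemma~\ref{lem_convexity_dt}, and your passage from the pointwise inequality to the maximum over $x\in K$ is precisely that step. Nothing further is needed.
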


\subsection{The continuous-time case}

Consider an ODE%
\begin{equation}\label{eq_ct_sys}
  \dot{x} = F(x)%
\end{equation}
with a $C^1$-vector field $F:\R^n \rightarrow \R^n$. Let $\phi^t(x)$ denote the induced flow and assume that $K \subset \R^n$ is a compact forward-invariant set which is the closure of its interior. We write%
\begin{equation*}
  A(x) := \rmD F(x) \mbox{\quad for all\ } x \in K.%
\end{equation*}

For any Riemannian metric $P \in C^1(K,\SC^+_n)$,\footnote{In fact, we only need that the orbital derivatives $\dot{P}(x)$ exist, hence less regularity would be sufficient.} we put%
\begin{equation*}
  \hat{\Sigma}^P(x) := \sum_{i=1}^n \max\{0,\zeta_i^P(x)\} \mbox{\quad for all\ } x \in K,%
\end{equation*}
where $\zeta_1^P(x) \geq \ldots \geq \zeta_n^P(x)$ are the solutions of the algebraic equation%
\begin{equation}\label{eq_ct_alg_eq}
  \det[ P(x)A(x) + A(x)\trn P(x) + \dot{P}(x) - \lambda P(x) ] = 0.%
\end{equation}
Moreover, we put%
\begin{equation*}
  \hat{\Sigma}(P) := \max_{x\in K}\hat{\Sigma}^P(x) \mbox{\quad for all\ } P \in C^1(K,\SC^+_n).%
\end{equation*}
According to \cite[Thm.~11]{KPM}, the restoration entropy of system \eqref{eq_ct_sys} on $K$ satisfies%
\begin{equation*}
  h_{\res}(f,K) = \frac{1}{2\ln(2)} \inf_{P \in C^1(K,\SC^+_n)} \max_{x\in K}\sum_{i=1}^n \max\{0,\zeta_i^P(x)\}.%
\end{equation*}
The numbers $\zeta_1^P(x),\ldots,\zeta_n^P(x)$ can be regarded as infinitesimal counterparts to the singular values employed in the discrete-time case.%

The following lemma is the continuous-time counterpart to Lemma \ref{lem_convexity_dt}.%

\begin{lemma}\label{lem_convexity_ct}
Let $P,Q \in C^1(K,\SC^+_n)$. Then for every $x\in K$ and every $\theta \in [0,1]$ the following inequality holds:%
\begin{equation*}
  \hat{\Sigma}^{P \#_{\theta}\, Q}(x) \leq (1 - \theta)\hat{\Sigma}^P(x) + \theta\hat{\Sigma}^Q(x).%
\end{equation*}
That is, $\hat{\Sigma}$ is geodesically convex.
\end{lemma}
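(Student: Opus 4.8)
The plan is to reduce the continuous-time statement to the discrete-time Lemma \ref{lem_convexity_dt} via a flow/time-one-map limiting argument, exploiting the fact that the numbers $\zeta_i^P(x)$ are infinitesimal versions of the logarithmic singular values. Fix $x \in K$ and $\theta \in [0,1]$. For small $t>0$, let $\phi^t$ denote the flow of \eqref{eq_ct_sys} and consider the discrete-time "snapshot" system with map $\phi^t$; its Jacobian at $x$ is $A_t(x) := \rmD\phi^t(x)$, which is invertible for $t$ small (it tends to $I$ as $t \to 0$). For a $C^1$ metric $P$, a Taylor expansion of the associated singular-value sum for $\phi^t$ should give
\begin{equation*}
  \Sigma^P_{(\phi^t)}(x) = \frac{t}{2\ln 2}\,\hat\Sigma^P(x) + o(t) \quad \text{as } t \to 0^+,
\end{equation*}
where the factor comes from the fact that the eigenvalues of $[B_t(x)\trn B_t(x)]^{1/2}$ with $B_t(x) = P(\phi^t(x))^{1/2}A_t(x)P(x)^{-1/2}$ expand as $1 + \tfrac{t}{2}\zeta_i^P(x) + o(t)$ — this is exactly the infinitesimal content of the algebraic equation \eqref{eq_ct_alg_eq}, since $\tfrac{d}{dt}\big|_{t=0}\big(P(\phi^t(x))^{1/2}A_t(x)P(x)^{-1/2}\big)$ is symmetrized appropriately by $B_t\trn B_t$, and $P(x)A(x)+A(x)\trn P(x)+\dot P(x)$ is precisely $\tfrac{d}{dt}\big|_{t=0}\big(A_t(x)\trn P(\phi^t(x))A_t(x)\big)$ after conjugation by $P(x)^{-1/2}$. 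Applying $\log = \log_2$ turns $\log(1+\tfrac{t}{2}\zeta_i + o(t))$ into $\tfrac{t}{2\ln 2}\zeta_i + o(t)$, and the $\max\{0,\cdot\}$ passes through for $t$ small since the sign of each term is controlled by the sign of $\zeta_i^P(x)$ (the borderline case $\zeta_i = 0$ contributes $o(t)$ either way and is harmless).

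Granting this asymptotic, the proof concludes quickly. Note that $(P \#_\theta Q)$ is again a $C^1$ metric (as remarked after the definition of $\#_\theta$ on metric spaces), so $\hat\Sigma^{P\#_\theta Q}(x)$ is defined, and the asymptotic applies to all three metrics $P$, $Q$, and $P\#_\theta Q$. By Lemma \ref{lem_convexity_dt} applied to the map $\phi^t$ — whose hypotheses (invertibility of $A_t(x)$) hold for $t$ small — we have, pointwise in $x$,
\begin{equation*}
  \Sigma^{P\#_\theta Q}_{(\phi^t)}(x) \leq (1-\theta)\Sigma^P_{(\phi^t)}(x) + \theta\,\Sigma^Q_{(\phi^t)}(x).
\end{equation*}
Substituting the expansions, dividing by $t > 0$, and letting $t \to 0^+$ kills the $o(t)$ terms and yields
\begin{equation*}
  \frac{1}{2\ln 2}\hat\Sigma^{P\#_\theta Q}(x) \leq \frac{1-\theta}{2\ln 2}\hat\Sigma^P(x) + \frac{\theta}{2\ln 2}\hat\Sigma^Q(x),
\end{equation*}
and multiplying by $2\ln 2$ gives the claim. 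Taking the maximum over $x \in K$ then gives geodesic convexity of $\hat\Sigma$.

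The main obstacle is the first-order asymptotic: one must justify the expansion $\Sigma^P_{(\phi^t)}(x) = \tfrac{t}{2\ln 2}\hat\Sigma^P(x) + o(t)$ rigorously, including the fact that the relevant quantity is differentiable in $t$ at $t=0$ despite the $\max\{0,\cdot\}$ and despite possible eigenvalue crossings of $[B_t(x)\trn B_t(x)]^{1/2}$. The cleanest route is probably to first establish that the (unordered) eigenvalues of $P(x)^{-1/2}A_t(x)\trn P(\phi^t(x))A_t(x)P(x)^{-1/2}$ — equivalently, the generalized eigenvalues of the pair $(A_t(x)\trn P(\phi^t(x))A_t(x),\,P(x))$ — depend $C^1$ in $t$ on each branch with derivative at $t=0$ equal to the corresponding $\zeta_i^P(x)$ (by differentiating the characteristic polynomial, noting the value at $t=0$ is that of the pair $(P(x),P(x))$, i.e. $1$ with multiplicity $n$, and using that the first-order coefficient matrix is $P(x)A(x)+A(x)\trn P(x)+\dot P(x)$), and then to handle $\sum_i \max\{0,\tfrac12\log(\cdot)\}$ by splitting $K$-independent bookkeeping into the indices with $\zeta_i^P(x)>0$, $\zeta_i^P(x)<0$, $\zeta_i^P(x)=0$. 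An alternative, avoiding differentiability subtleties, is to pass the inequality $\Sigma^{P\#_\theta Q}_{(\phi^t)} \le (1-\theta)\Sigma^P_{(\phi^t)} + \theta\Sigma^Q_{(\phi^t)}$ through $\liminf_{t\to0^+}\tfrac1t(\cdot)$ and $\limsup_{t\to0^+}\tfrac1t(\cdot)$ using only the one-sided bounds $\tfrac{t}{2\ln2}\hat\Sigma^P(x) + o(t) \le \Sigma^P_{(\phi^t)}(x)$ and $\Sigma^P_{(\phi^t)}(x)\le \tfrac{t}{2\ln2}\hat\Sigma^P(x)+o(t)$ — which follow from one-sided Lipschitz/semidifferentiability estimates for $\max\{0,\cdot\}$ composed with the continuous eigenvalue branches — so that full differentiability is never needed.
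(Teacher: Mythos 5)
Your proposal is correct and takes essentially the same route as the paper: apply the discrete-time Lemma \ref{lem_convexity_dt} to the time-$t$ maps $\phi^t$, divide by $t$, and let $t \to 0^+$, identifying the limiting slopes with the $\zeta_i^P(x)$. The paper sidesteps the nonsmoothness issue you flag by working with the partial sums $\sum_{i=1}^k \log \alpha_i^P(x;\phi^t)$ for each fixed $k$ and taking the maximum over $k$ only after the limit, via $\hat{\Sigma}^P(x) = \max_{0\leq k\leq n}\sum_{i=1}^k \zeta_i^P(x)$, and it obtains your ``main obstacle'' expansion from the derivative formula for log-singular values in \cite[Cor.~23]{KPM}.
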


\begin{proof}
Consider for each $t \geq 0$ the time-$t$ map $\phi^t$, which is well-defined on $K$ (due to compactness and forward-invariance). Let us put%
\begin{equation*}
  \Sigma^P(x,k,t) := \sum_{i=1}^k \log \alpha_i^P(x;\phi^t)%
\end{equation*}
for all $x\in K$, $k \in \{1,\ldots,n\}$ and $t \geq 0$, where $\alpha_1^P(x;\phi^t) \geq \ldots \geq \alpha_n^P(x;\phi^t)$ are the eigenvalues of $[B_t(x)\trn B_t(x)]^{\frac{1}{2}}$ with%
\begin{equation*}
  B_t(x) := P(\phi^t(x))^{\frac{1}{2}}\rmD\phi^t(x)P(x)^{-\frac{1}{2}}.%
\end{equation*}
From the proof of Lemma \ref{lem_convexity_dt}, we know that%
\begin{equation}\label{eq_dt_convexity}
  \Sigma^{P \#_{\theta} Q}(x,k,t) \leq (1 - \theta)\Sigma^P(x,k,t) + \theta\Sigma^Q(x,k,t)%
\end{equation}
for all $\theta \in [0,1]$ and $P,Q \in C^0(K,\SC^+_n)$. We claim that%
\begin{equation}\label{eq_from_dt_to_ct}
  \sum_{i=1}^k \zeta^P_i(x) = \ln(2) \lim_{t \rightarrow 0^+}\frac{1}{t}\Sigma^P(x,k,t) \mbox{\quad for all\ } x \in K.%
\end{equation}
Since $\hat{\Sigma}^P(x) = \max_{0 \leq k \leq n}\sum_{i=1}^k \zeta^P_i(x)$, this together with \eqref{eq_dt_convexity} implies the assertion of the lemma. To prove the claim, first observe that $\Sigma^P(x,k,0) = 0$ implies%
\begin{equation*}
  \lim_{t \rightarrow 0^+}\frac{1}{t}\Sigma^P(x,k,t) = \frac{\rmd}{\rmd t}\Bigl|_{t=0^+} \Sigma^P(x,k,t),%
\end{equation*}
provided that the derivative exists. To compute the derivative (and show its existence), we use \cite[Cor.~23]{KPM} which tells us that%
\begin{equation*}
  \frac{\rmd}{\rmd t}\Bigl|_{t=0^+} \log \alpha_i^P(x;\phi^t) = \frac{1}{\ln(2)} \lambda_i\Bigl(\frac{\rmd}{\rmd t}\Bigl|_{t=0^+}[B_t(x) + B_t(x)\trn]\Bigr),%
\end{equation*}
where $\lambda_1(p) \geq \ldots \geq \lambda_n(p)$ denote the eigenvalues of a real symmetric matrix $p$. We have%
\begin{equation*}
  \frac{\rmd}{\rmd t}\Bigl|_{t=0^+} B_t(x) = \Bigl[\frac{\rmd}{\rmd t}\Bigl|_{t=0^+} P(\phi^t(x))^{\frac{1}{2}}\Bigr] P(x)^{-\frac{1}{2}} + P(x)^{\frac{1}{2}} \rmD F(x) P(x)^{-\frac{1}{2}}%
\end{equation*}
and%
\begin{equation*}
  \frac{\rmd}{\rmd t}\Bigl|_{t=0^+} B_t(x)\trn = P(x)^{-\frac{1}{2}} \rmD F(x)\trn P(x)^{\frac{1}{2}} + P(x)^{-\frac{1}{2}} \Bigl[\frac{\rmd}{\rmd t}\Bigl|_{t=0^+} P(\phi^t(x))^{\frac{1}{2}}\Bigr].%
\end{equation*}
Write ${\bf r}(p) := p^{\frac{1}{2}}$. Then it is easy to see that%
\begin{equation*}
  \rmD {\bf r}(P(x)) \dot{P}(x) P(x)^{\frac{1}{2}} + P(x)^{\frac{1}{2}}\rmD{\bf r}(P(x)) \dot{P}(x) = \dot{P}(x)%
\end{equation*}
which, after multiplication with $P(x)^{-\frac{1}{2}}$ from both sides, yields%
\begin{align*}
  &\frac{\rmd}{\rmd t}\Bigl|_{t=0^+}[B_t(x) + B_t(x)\trn] \\
	&= P(x)^{\frac{1}{2}} \rmD F(x) P(x)^{-\frac{1}{2}} + P(x)^{-\frac{1}{2}} \rmD F(x)\trn P(x)^{\frac{1}{2}} + P(x)^{-\frac{1}{2}}\dot{P}(x) P(x)^{-\frac{1}{2}} \\
	&= P(x)^{-\frac{1}{2}}\bigl[ P(x) \rmD F(x) + \rmD F(x)\trn P(x) + \dot{P}(x) \bigr] P(x)^{-\frac{1}{2}}.
\end{align*}
The claim then follows from the observation that the solutions of \eqref{eq_ct_alg_eq} are precisely the eigenvalues of the matrix above.
\end{proof}

\begin{corollary}
The functional $\hat{\Sigma}:C^1(K,\SC^+_n) \rightarrow \R_+$ satisfies%
\begin{equation*}
  \hat{\Sigma}(P \#_{\theta}\, Q) \leq (1-\theta)\hat{\Sigma}(P) + \theta\hat{\Sigma}(Q)%
\end{equation*}
for all $P,Q \in C^1(K,\SC^+_n)$ and $\theta \in [0,1]$. That is, $\hat{\Sigma}$ is geodesically convex.
\end{corollary}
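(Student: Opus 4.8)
The plan is to deduce the statement immediately from Lemma~\ref{lem_convexity_ct}, exactly as the corollary following Lemma~\ref{lem_convexity_dt} is deduced there: the functional $\hat{\Sigma}$ is the pointwise maximum $\hat{\Sigma}(P) = \max_{x\in K}\hat{\Sigma}^P(x)$ of the family of functionals $P \mapsto \hat{\Sigma}^P(x)$, each of which is geodesically convex by the lemma, and a maximum of geodesically convex functionals (in the generalized sense of Section~\ref{sec_convexity}) is again geodesically convex.

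Concretely, I would fix $P,Q \in C^1(K,\SC^+_n)$ and $\theta \in [0,1]$. First note that $P \#_{\theta}\, Q$ again lies in $C^1(K,\SC^+_n)$, since the map $\kappa(p,q) = p \#_{\theta}\, q$ is $C^{\infty}$ and $P \#_{\theta}\, Q = \kappa \circ (P \tm Q)$, as recorded at the beginning of Section~\ref{sec_convexity}; hence $\hat{\Sigma}(P \#_{\theta}\, Q)$ is well-defined. For each $x\in K$, Lemma~\ref{lem_convexity_ct} gives $\hat{\Sigma}^{P \#_{\theta}\, Q}(x) \leq (1-\theta)\hat{\Sigma}^P(x) + \theta\hat{\Sigma}^Q(x)$. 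Bounding $\hat{\Sigma}^P(x) \leq \hat{\Sigma}(P)$ and $\hat{\Sigma}^Q(x) \leq \hat{\Sigma}(Q)$ and then taking the maximum over $x\in K$ on the left-hand side yields $\hat{\Sigma}(P \#_{\theta}\, Q) \leq (1-\theta)\hat{\Sigma}(P) + \theta\hat{\Sigma}(Q)$, which is the claim. The only auxiliary point worth a line is that this maximum is attained, i.e.\ that $x \mapsto \hat{\Sigma}^{P \#_{\theta}\, Q}(x)$ is continuous on the compact set $K$: the matrices $P(x)A(x) + A(x)\trn P(x) + \dot{P}(x)$ and $P(x)$ depend continuously on $x$, so the roots $\zeta_i^{P\#_\theta Q}(x)$ of~\eqref{eq_ct_alg_eq} (generalized eigenvalues of a pair whose second component is positive definite) do as well, and $t \mapsto \max\{0,t\}$ is continuous.

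I do not expect any substantive obstacle here: all the geometric work is already contained in Lemma~\ref{lem_convexity_ct}, and this corollary is the routine passage from a pointwise inequality to the inequality for the spatial supremum, entirely analogous to the discrete-time corollary following Lemma~\ref{lem_convexity_dt}.
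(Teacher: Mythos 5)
Your proposal is correct and is exactly the argument the paper intends: the corollary is an immediate consequence of Lemma~\ref{lem_convexity_ct}, obtained by bounding $\hat{\Sigma}^P(x)\leq\hat{\Sigma}(P)$, $\hat{\Sigma}^Q(x)\leq\hat{\Sigma}(Q)$ pointwise and taking the maximum over $x\in K$. Your additional remarks on $P\#_{\theta}\,Q$ retaining $C^1$ regularity and on the attainment of the maximum are harmless extras, not a deviation from the paper's route.
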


\section{Reduction to a finite-dimensional problem}\label{sec_reduction}

In this section, we explain how we can formulate a finite-dimensional geodesically convex optimization problem over a space of metrics on $K$ conformal to a constant metric. Moreover, we introduce the Riemannian subgradient algorithm that can solve such a problem.%

\subsection{Formulation of the optimization problem}

We want to develop numerical algorithms to solve the optimization problems%
\begin{equation}\label{eq_min_problems}
  \min_{P \in C^0(K,\SC^+_n)} \Sigma(P) \mbox{\quad and \quad} \min_{P \in C^1(K,\SC^+_n)} \hat{\Sigma}(P).%
\end{equation}
To transform these infinite-dimensional problems into feasible finite-dimensional convex optimization problems, we have to restrict the domain to a geodesically convex subset which can be described by finitely many real parameters. At the same time, we must be aware that for the remote state estimation problem we have to be able to find a minimal ball covering of the set $K$ with geodesic balls in the (optimal) metric $P$. Hence, we should search for an optimal metric within a class of metrics that has a ``nice'' analytic description. The following class of conformal metrics seems to be a good candidate:%
\begin{align*}
  \mathrm{C}_d(K) &:= \{ P \in C^{\infty}(K,\SC^+_n) : P(x) \equiv \rme^{r(x)}p \mbox{ for some } p \in \SC^+_n, \\
	                          &\qquad\qquad\qquad\qquad\qquad\qquad r(x) \mbox{ a polynomial of degree} \leq d \}.%
\end{align*}
Another reason for the choice of this class of metrics is that in several examples, where an analytical expression for an optimal metric is known, it is of the form $\rme^{r(x)}p$, see for instance \cite[Sec.~4]{KPM} and \cite[Sec.~8]{MP2}.%

\begin{lemma}
The set $\mathrm{C}_d(K)$ is geodesically convex, since%
\begin{equation}\label{eq_conf_expr}
  (\rme^{r(x)}p) \#_{\theta}\, (\rme^{s(x)}q) = \rme^{(1-\theta)r(x) + \theta s(x)} p \#_{\theta}\, q \mbox{\quad for all\ } \theta \in [0,1].%
\end{equation}
\end{lemma}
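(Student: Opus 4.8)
The plan is to prove the identity \eqref{eq_conf_expr} directly from the explicit geodesic formula \eqref{eq_geod_form} together with the scaling property \eqref{eq_s_geodesics}, and then observe that geodesic convexity of $\mathrm{C}_d(K)$ follows immediately because the right-hand side of \eqref{eq_conf_expr} is again a metric of the required form. The key point is that the exponent $(1-\theta)r(x)+\theta s(x)$ is a polynomial of degree $\leq d$ whenever $r$ and $s$ are (the space of polynomials of bounded degree is a vector space), and $p\#_\theta\, q \in \SC^+_n$ by \eqref{eq_geod_form}, so the product lies in $\mathrm{C}_d(K)$.

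To establish \eqref{eq_conf_expr}, I would fix $x \in K$, write $a := \rme^{r(x)} > 0$ and $b := \rme^{s(x)} > 0$, and simply apply \eqref{eq_s_geodesics} with the matrices $p, q$ and the positive scalars $a, b$:
\begin{equation*}
  (\rme^{r(x)}p) \#_{\theta}\, (\rme^{s(x)}q) = (ap)\#_{\theta}\,(bq) = a^{1-\theta} b^{\theta} (p \#_{\theta}\, q) = \rme^{(1-\theta)r(x) + \theta s(x)} (p \#_{\theta}\, q),
\end{equation*}
using $a^{1-\theta}b^\theta = \rme^{(1-\theta)r(x)}\rme^{\theta s(x)} = \rme^{(1-\theta)r(x)+\theta s(x)}$. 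Since this holds pointwise for every $x\in K$, it gives the claimed equality of metrics on $K$.

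Finally, for the convexity statement: given $P, Q \in \mathrm{C}_d(K)$, write $P(x) = \rme^{r(x)}p$ and $Q(x) = \rme^{s(x)}q$ with $p,q \in \SC^+_n$ and $r,s$ polynomials of degree $\leq d$. By \eqref{eq_conf_expr}, $(P\#_\theta\, Q)(x) = \rme^{u_\theta(x)}(p\#_\theta\, q)$ where $u_\theta := (1-\theta)r + \theta s$ is a polynomial of degree $\leq d$ and $p\#_\theta\, q \in \SC^+_n$; moreover $p\#_\theta\, q$ is a fixed matrix independent of $x$, so $P\#_\theta\, Q$ is indeed of the form $\rme^{u_\theta(x)}(\text{constant positive matrix})$, hence lies in $\mathrm{C}_d(K)$ for every $\theta\in[0,1]$. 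This is exactly the defining property of geodesic convexity for subsets of $C^0(K,\SC^+_n)$ introduced in Section \ref{sec_convexity}.

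There is essentially no obstacle here — the statement is a one-line consequence of \eqref{eq_s_geodesics}; the only thing to be careful about is noting that the class $\mathrm{C}_d(K)$ is closed under the operation not merely because the formula \eqref{eq_conf_expr} holds, but because the resulting exponent stays a polynomial of degree $\leq d$ and the resulting constant factor $p\#_\theta\, q$ remains positive definite, both of which are immediate.
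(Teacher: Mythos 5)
Your proof is correct and follows exactly the paper's route: the identity \eqref{eq_conf_expr} is obtained pointwise from the scaling property \eqref{eq_s_geodesics} with $a=\rme^{r(x)}$, $b=\rme^{s(x)}$, which is all the paper's own one-line proof invokes. Your additional remark that the exponent $(1-\theta)r+\theta s$ stays a polynomial of degree $\leq d$ and that $p\#_\theta\, q$ is a constant positive definite matrix is a correct (and implicit in the paper) completion of the convexity claim.
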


\begin{proof}
This follows immediately from the identity \eqref{eq_s_geodesics}.
\end{proof}

Since a polynomial of degree $d$ in $n$ variables has $\binom{d+n}{n} = \frac{(d+n)!}{d!n!}$ coefficients, the class $\mathrm{C}_d(K)$ can be parameterized by $\binom{d+n}{n} + \frac{n(n+1)}{2}$ real parameters, where the parameter space is%
\begin{equation*}
  P(d,n) := \R^{\binom{d+n}{n}} \tm \SC^+_n.%
\end{equation*}
We equip the Euclidean factor of $P(d,n)$ with the Euclidean metric and $P(d,n)$ with the associated product metric. Then, as is well-known, the unique geodesic between two points $(a,p)$ and $(b,q)$ in $P(d,n)$, parameterized on $[0,1]$, has the form%
\begin{equation}\label{eq_product_geodesic}
  \gamma_{(a,p)(b,q)}(\theta) = ((1-\theta)a + \theta b,p \#_{\theta}\, q).%
\end{equation}
Given a parameter vector $a \in \R^{\binom{d+n}{n}}$, we write $r_a(x)$ for the associated polynomial (where we assume that the assignment $a \mapsto r_a(x)$ is defined in a consistent way).%

\begin{proposition}\label{prop_J_convex}
Given the discrete-time system \eqref{eq_dt_sys}, the function%
\begin{equation*}
  J:P(d,n) \rightarrow \R_+,\quad J(a,p) := \Sigma(\rme^{r_a(\cdot)}p)%
\end{equation*}
is geodesically convex.
\end{proposition}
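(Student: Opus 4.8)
The plan is to reduce the claim directly to the geodesic convexity of $\Sigma$ on $C^0(K,\SC^+_n)$ established in the corollary following Lemma~\ref{lem_convexity_dt}, by showing that the parameterization map $(a,p) \mapsto \rme^{r_a(\cdot)}p$ sends geodesics of $P(d,n)$ to the ``geodesics'' $\theta \mapsto P \#_\theta Q$ in $\mathrm{C}_d(K)$. Concretely, fix two points $(a,p),(b,q) \in P(d,n)$ and let $\gamma_{(a,p)(b,q)}$ be the unique minimizing geodesic joining them, which by \eqref{eq_product_geodesic} is $\gamma(\theta) = ((1-\theta)a + \theta b, p \#_\theta q)$. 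I would then compute $J(\gamma(\theta))$ as $\Sigma$ evaluated at the metric $x \mapsto \rme^{r_{(1-\theta)a + \theta b}(x)}(p \#_\theta q)$. Here I use that the assignment $a \mapsto r_a$ is linear (it is the coordinate representation of a polynomial in terms of its coefficients), so $r_{(1-\theta)a + \theta b}(x) = (1-\theta)r_a(x) + \theta r_b(x)$ pointwise.

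Next, applying the identity \eqref{eq_conf_expr} with $r = r_a$, $s = r_b$, we get
\begin{equation*}
  \rme^{r_{(1-\theta)a+\theta b}(x)}(p \#_\theta q) = \rme^{(1-\theta)r_a(x) + \theta r_b(x)}(p \#_\theta q) = (\rme^{r_a(x)}p) \#_\theta (\rme^{r_b(x)}q),
\end{equation*}
so that the metric along the geodesic is exactly $(\rme^{r_a(\cdot)}p) \#_\theta (\rme^{r_b(\cdot)}q)$. Writing $P := \rme^{r_a(\cdot)}p$ and $Q := \rme^{r_b(\cdot)}q$, both elements of $\mathrm{C}_d(K) \subset C^0(K,\SC^+_n)$, this gives $J(\gamma(\theta)) = \Sigma(P \#_\theta Q)$. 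Now invoke the geodesic convexity of $\Sigma$ from the corollary to Lemma~\ref{lem_convexity_dt}: $\Sigma(P \#_\theta Q) \le (1-\theta)\Sigma(P) + \theta\Sigma(Q) = (1-\theta)J(a,p) + \theta J(b,q)$. Since $\gamma$ was the geodesic of $P(d,n)$ joining $(a,p)$ and $(b,q)$ and $\theta \in [0,1]$ was arbitrary, this is precisely the statement that $J$ is geodesically convex on $P(d,n)$.

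I expect no genuine obstacle here: the content of the proposition is entirely carried by Lemma~\ref{lem_convexity_dt} and its corollary, together with the algebraic identity \eqref{eq_conf_expr} and the explicit form \eqref{eq_product_geodesic} of the product geodesic. The only point requiring a word of care is the observation that $a \mapsto r_a$ being affine-linear is what lets the Euclidean segment $(1-\theta)a + \theta b$ in the coefficient space correspond to the convex combination of polynomials appearing in \eqref{eq_conf_expr}; this is guaranteed by the stipulation in the excerpt that $a \mapsto r_a(x)$ is ``defined in a consistent way'', i.e.\ linearly in the coefficients. One should also note in passing that the values $J(a,p)$ are finite — this follows because $\Sigma^P(x) \le \sum_{i=1}^n \max\{0,\log\alpha_i^P(x)\}$ is a continuous function of $x$ on the compact set $K$ for any continuous metric $P$, so the maximum defining $\Sigma(P)$ is attained and finite — but finiteness is not strictly needed for the convexity inequality itself.
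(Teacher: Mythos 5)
Your proposal is correct and follows essentially the same route as the paper's proof: it uses the product geodesic formula \eqref{eq_product_geodesic}, the linearity of $a \mapsto r_a$, the identity \eqref{eq_conf_expr} to identify the metric along the geodesic with $[\rme^{r_a(\cdot)}p] \#_{\theta}\, [\rme^{r_b(\cdot)}q]$, and then the geodesic convexity of $\Sigma$ from Lemma~\ref{lem_convexity_dt}. The remarks on linearity of the coefficient parameterization and finiteness of $J$ are fine but not needed beyond what the paper already does.
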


\begin{proof}
We have to show that%
\begin{equation*}
  J(\gamma_{(a,p)(b,q)}(\theta)) \leq (1 - \theta)J(a,p) + \theta J(b,q)%
\end{equation*}
for any $a,b\in \R^{\binom{d+n}{n}}$, $p,q \in \SC^+_n$ and $\theta \in [0,1]$:%
\begin{align*}
  J(\gamma_{(a,p)(b,q)}(\theta)) &\stackrel{\eqref{eq_product_geodesic}}{=} J((1-\theta)a + \theta b,p \#_{\theta}\, q) \\
	&= \Sigma( \rme^{(1-\theta)r_a(\cdot) + \theta r_b(\cdot)} p \#_{\theta}\, q) \\
	&\stackrel{\eqref{eq_conf_expr}}{=} \Sigma( [\rme^{r_a(\cdot)}p] \#_{\theta}\, [\rme^{r_b(\cdot)}q] ) \\
	&\leq (1 - \theta)\Sigma(\rme^{r_a(\cdot)}p) + \theta\Sigma(\rme^{r_b(\cdot)}q) \\
	&= (1 - \theta) J(a,p) + \theta J(b,q),
\end{align*}
where the inequality follows from Lemma \ref{lem_convexity_dt}.
\end{proof}

In the continuous-time case, we can analogously introduce a geodesically convex functional $\hat{J}$ on $P(d,n)$ via $\hat{\Sigma}$.%

The algorithms developed in the next sections solve the minimization problems%
\begin{equation*}
  \min_{(a,p) \in P(d,n)} J(a,p) \mbox{\quad and \quad} \min_{(a,p) \in P(d,n)} \hat{J}(a,p),%
\end{equation*}
respectively, instead of \eqref{eq_min_problems}.%

\subsection{Solution via the subgradient algorithm}

The paper \cite{FOl} introduces a subgradient algorithm to solve geodesically convex optimization problems on Riemannian manifolds. Before we go into details of this algorithm, recall the following facts for a geodesically convex function $f:M \rightarrow \R$ defined on a complete Riemannian manifold $M$ \cite{Udr}:%
\begin{itemize}
\item $f$ is locally Lipschitz continuous \cite[Cor.~3.10]{Udr}.%
\item Given $x\in M$, a vector $s \in T_xM$ is called a \emph{subgradient} of $f$ at $x$ if for any geodesic $\gamma$ of $M$ with $\gamma(0) = x$ the following inequality holds:%
\begin{equation*}
  (f \circ \gamma)(\theta) \geq f(x) + \theta \langle s,\dot{\gamma}(0) \rangle_x \mbox{\quad for all\ } \theta \geq 0.%
\end{equation*}
The set of all subgradients, denoted by $\partial f(x)$, is called the \emph{subdifferential} of $f$ at $x$. The subdifferential at any point $x$ is nonempty, convex and compact \cite[Thm.~4.5 and 4.6]{Udr}.%
\end{itemize}

The \emph{subgradient algorithm} consists of the following steps: Given a sequence $(\theta_k)_{k\in\N}$ of step sizes with $\theta_k > 0$ for all $k$:%
\begin{enumerate}
\item[(0)] Initialize. Choose $p_1 \in M$ and compute some $s_1 \in \partial f(p_1)$. Put $k := 1$.%
\item[(1)] If $s_k = 0$, stop. Otherwise, compute the geodesic $\gamma_{v_k}$ with $\gamma_{v_k}(0) = p_k$, $\dot{\gamma}_{v_k}(0) = v_k$, $v_k = -s_k/|s_k|$.%
\item[(2)] Put $p_{k+1} := \gamma_{v_k}(\theta_k)$.%
\item[(3)] Compute some $s_{k+1} \in \partial f(p_{k+1})$. Put $k := k+1$ and go to (1).%
\end{enumerate}

For the convergence of the sequence $p_k$ to a minimizer, a proper choice of the step sizes $\theta_k$ is necessary, and it is an important assumption that the sectional curvatures of $M$ are uniformly bounded from below. In our case, this is guaranteed by Lemma \ref{lem_product_curv} in the appendix, which shows that the parameter space $P(d,n)$ for the conformal metrics in $C_d(K)$ satisfies this property.%

The \emph{diminishing or exogeneous step size rule} requires to choose the step sizes $\theta_k$ such that%
\begin{equation*}
  \sum_{k=1}^{\infty}\theta_k = \infty \mbox{\quad and \quad} \sum_{k=1}^{\infty}\theta_k^2 < \infty.%
\end{equation*}
Assuming that the sectional curvatures of $M$ are uniformly bounded below, with such a choice (typically, $\theta_k = a/(k+b)$ with $a>0$, $b \geq 0$), \cite[Thm.~3.2]{Fea} guarantees that%
\begin{equation*}
  \liminf_{k\rightarrow\infty} f(p_k) = \inf f%
\end{equation*}
and that $p_k$ converges to a minimizer if a minimizer exists. Provided that more information about the function $f$ is available, other step size rules can be used, which come with estimates for the speed of convergence, see \cite[Thm.~3.3]{Fea}.%

\section{Computation of subgradients}\label{sec_subgradient_comp}

In this section, we provide a method to compute subgradients of our objective functions $J$ and $\hat{J}$, respectively.%

\subsection{The discrete-time case}

For the computation of subgradients, we have to recall further facts about geodesically convex functions $f:M \rightarrow \R$. First, we define unilateral directional derivatives.%

\begin{definition}
Let $\gamma:[0,1] \rightarrow M$ be a geodesic with $\gamma(0) = x$ and $\dot{\gamma}(0) = v$. Then%
\begin{equation}\label{eq_directional_der}
  \partial_v f(x) := \lim_{\theta \rightarrow 0^+}\frac{f(\gamma(\theta)) - f(x)}{\theta}%
\end{equation}
is called the \emph{unilateral directional derivative} of $f$ at $x$ in direction $v$.
\end{definition}

Then we have the following facts:%
\begin{enumerate}
\item[(F1)] \cite[Thm.~4.2]{Udr}: The unilateral directional derivative \eqref{eq_directional_der} exists and satisfies%
\begin{equation*}
  \partial_v f(x) = \inf_{\theta>0}\frac{f(\gamma(\theta)) - f(x)}{\theta}.%
\end{equation*}
Moreover, $v \mapsto \partial_v f(x)$ is convex and positively homogeneous with $\partial_0 f(x) = 0$, $-\partial_{-v}f(x) \leq \partial_vf(x)$.%
\item[(F2)] \cite[Thm.~4.8]{Udr}: A vector $s \in T_xM$ is a subgradient of $f$ at $x$ if and only if%
\begin{equation*}
  \partial_vf(x) \geq \langle s,v \rangle_x \mbox{\quad for all\ } v \in T_xM.%
\end{equation*}
\item[(F3)] (Trivial) If $f = \sup_{\alpha\in A} f_{\alpha}$ for a family of geodesically convex functions $f_{\alpha}$, and $f(x) = f_{\alpha}(x)$ for some $x\in M$ and $\alpha \in A$, then $\partial f_{\alpha}(x) \subset \partial f(x)$.%
\end{enumerate}

We will use these facts to compute a subgradient of the geodesically convex function $J:P(d,n) \rightarrow \R_+$, defined in Proposition \ref{prop_J_convex}. We can write $J$ as%
\begin{equation*}
  J(a,p) = \max_{x\in K}\max_{0 \leq k \leq n}\sum_{i=1}^k \log \alpha_i\bigl(\rme^{\frac{1}{2}[r_a(\phi(x)) - r_a(x)]}p^{\frac{1}{2}}A(x)p^{-\frac{1}{2}}\bigr).%
\end{equation*}
From the proofs of Lemma \ref{lem_convexity_dt} and Proposition \ref{prop_J_convex}, we can see that the inner functions%
\begin{equation*}
  J_{k,x}(a,p) := \sum_{i=1}^k \log \alpha_i\bigl(\rme^{\frac{1}{2}[r_a(\phi(x)) - r_a(x)]}p^{\frac{1}{2}}A(x)p^{-\frac{1}{2}}\bigr)%
\end{equation*}
are geodesically convex for each $x\in K$ and $k = 1,\ldots,n$. Hence, by (F3), the task of computing a subgradient for $J$ at $(a,p)$ splits into the following three subtasks:%
\begin{enumerate}
\item[(T1)] Solve the maximization problem%
\begin{equation}\label{eq_dt_maximization}
  \max_{x\in K}\Bigl[\max_{0 \leq k \leq n}\sum_{i=1}^k \log \alpha_i\bigl(\rme^{\frac{1}{2}[r_a(\phi(x)) - r_a(x)]}p^{\frac{1}{2}}A(x)p^{-\frac{1}{2}}\bigr)\Bigr]%
\end{equation}
leading to a (not necessarily unique) maximizer $x^* \in K$.%
\item[(T2)] Solve the maximization problem%
\begin{equation*}
   \max_{0 \leq k \leq n}\sum_{i=1}^k \log \alpha_i\bigl(\rme^{\frac{1}{2}[r_a(\phi(x^*)) - r_a(x^*)]}p^{\frac{1}{2}}A(x^*)p^{-\frac{1}{2}}\bigr)%
\end{equation*}
which is trivial, since only finitely many quantities are involved. This leads to a maximizer $k^*$ (where $k^* = 0$ is allowed as a trivial case).%
\item[(T3)] Compute a subgradient of $J_{k^*,x^*}$ at $(a,p)$.%
\end{enumerate}
For (T1), there is no general method, since this optimization problem may not have nice properties (such as convexity). However, it is lower-dimensional than the original minimization problem we want to solve and the existence of a maximizer is guaranteed.%

For task (T3), we first note that $J_{k^*,x^*}(a,p)$ can be written as%
\begin{equation*}
  J_{k^*,x^*}(a,p) = \underbrace{\frac{k^*}{2\ln(2)}[r_a(\phi(x^*)) - r_a(x^*)]}_{=: J^1_{k^*,x^*}(a)} + \underbrace{\sum_{i=1}^{k^*} \log \alpha_i(p^{\frac{1}{2}}A(x^*)p^{-\frac{1}{2}})}_{=: J^2_{k^*,x^*}(p)}.%
\end{equation*}
By definition, a subgradient of $J_{k^*,x^*}$ at $(a,p)$ is a tangent vector $s \in T_{(a,p)}P(d,n) \cong \R^{\binom{d+n}{n}} \tm \SC_n$, $s = (s_1,s_2)$, such that%
\begin{equation*}
  J_{k^*,x^*}(\gamma_1(\theta),\gamma_2(\theta)) \geq J_{k^*,x^*}(a,p) + \theta \langle (s_1,s_2), (\dot{\gamma}_1(0),\dot{\gamma}_2(0)) \rangle_{(a,p)}%
\end{equation*}
for all $\theta \geq 0$ and for every geodesic $\gamma(\theta) = (\gamma_1(\theta),\gamma_2(\theta))$ in $P(d,n)$ with $\gamma(0) = (a,p)$. This is equivalent to%
\begin{equation*}
  J^1_{k^*,x^*}(\gamma_1(\theta)) + J^2_{k^*,x^*}(\gamma_2(\theta)) \geq J^1_{k^*,x^*}(a) + J^2_{k^*,x^*}(p) + \theta \langle s_1,\dot{\gamma}_1(0) \rangle_a + \theta \langle s_2,\dot{\gamma}_2(0) \rangle_p.%
\end{equation*}
Hence, we can split the task of computing a subgradient of $J_{k^*,x^*}$ at $(a,p)$ into the following subtasks:%
\begin{enumerate}
\item[(T3.1)] Compute a subgradient $s_1 \in \R^{\binom{d+n}{n}}$ of $J^1_{k^*,x^*}$ at $a$.%
\item[(T3.2)] Compute a subgradient $s_2 \in \SC_n$ of $J^2_{k^*,x^*}$ at $p$.%
\end{enumerate}
Task (T3.1) is trivial, since $a \mapsto J^1_{k^*,x^*}(a)$ is a linear function. To see how its constant gradient $\nabla J^1_{k^*,x^*}(a)$ looks like, we consider an example. Let $n = d = 2$ and write%
\begin{equation*}
  r_a(x) = a_0 + a_1 x_1 + a_2 x_2 + a_{12}x_1x_2 + a_{11} x_1^2 + a_{22}x_2^2.%
\end{equation*}
Then%
\begin{align*}
  J^1_{k^*,x^*}(a) &= \frac{k^*}{2\ln(2)}\bigl[0,\phi(x^*)_1 - x^*_1,\phi(x^*)_2 - x^*_2,\phi(x^*)_1\phi(x^*)_2 - x^*_1x^*_2,\\
	&\qquad \phi(x^*)_1^2 - (x^*_1)^2,\phi(x^*)_2^2 - (x^*_2)^2 \bigr] \cdot \left[\begin{array}{c} a_0 \\ a_1 \\ a_2 \\ a_{12} \\ a_{11} \\ a_{22} \end{array}\right] = \nabla J^1_{k^*,x^*}(a) \cdot \left[\begin{array}{c} a_0 \\ a_1 \\ a_2 \\ a_{12} \\ a_{11} \\ a_{22} \end{array}\right].%
\end{align*}
Since the gradient, if it exists, is the only subgradient, we are done with (T3.1).%

To compute a subgradient of $J^2_{k^*,x^*}$ at $p$, we decompose this function as follows:%
\begin{equation*}
  J^2_{k^*,x^*} = f \circ \alpha \circ \zeta,%
\end{equation*}
where%
\begin{align}\label{eq_fct_def}
\begin{split}
  \zeta:\SC^+_n \rightarrow \Gl(n,\R),\quad p &\mapsto p^{\frac{1}{2}}A(x^*)p^{-\frac{1}{2}}, \\
	\alpha:\Gl(n,\R) \rightarrow \R^n,\quad g &\mapsto (\alpha_1(g),\ldots,\alpha_n(g)), \\
	f:\R^n \rightarrow \R \cup \{-\infty\},\quad x &\mapsto \sum_{i=1}^{k^*} \log \hat{x}_i,
\end{split}
\end{align}
where $\hat{x} = (\hat{x}_1,\ldots,\hat{x}_n)$ is the vector that is derived from $x$ by first taking the absolute value of each component and then putting these nonnegative numbers in non-increasing order. The following lemma yields some crucial properties of the functions $\zeta$ and $f$.%

\begin{lemma}\label{lem_fct_props}
The following holds:%
\begin{enumerate}
\item[(i)] The function $\zeta$ is differentiable and its derivative satisfies%
\begin{equation*}
  \rmD\zeta(p)h = XA(x^*)p^{-\frac{1}{2}} - p^{\frac{1}{2}}A(x^*)p^{-\frac{1}{2}}Xp^{-\frac{1}{2}}%
\end{equation*}
for all $h \in T_p\SC^+_n = \SC_n$, where $X$ is the unique solution of the Lyapunov equation%
\begin{equation*}
  p^{\frac{1}{2}}X + Xp^{\frac{1}{2}} = h.%
\end{equation*}
\item[(ii)] The function $f$ is absolutely symmetric, i.e., for every signed $n \tm n$ permutation matrix\footnote{A signed permutation matrix is a matrix $P$ such that $|P|$ (componentwise defined absolute value) is a permutation matrix.} $P$ it holds that $f(Px) = f(x)$ for all $x \in \R^n$.%
\item[(iii)] Let $x \in \R^n$ with $x_1 \geq x_2 \geq \ldots \geq x_{k^*} > x_{k^*+1} \geq x_{k^*+2} \geq \ldots \geq x_n > 0$. Then $f$ is differentiable at $x$ with%
\begin{equation*}
  \nabla f(x) = \sum_{i=1}^{k^*} \frac{e_i\trn}{\ln(2)x_i} = \frac{1}{\ln(2)}\Bigl(\frac{1}{x_1},\ldots,\frac{1}{x_{k^*}},0,\ldots,0\Bigr),%
\end{equation*}
where $e_i$ is the $i$-th unit vector in $\R^n$.
\end{enumerate}
\end{lemma}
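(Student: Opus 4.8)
The plan is to prove the three parts of Lemma~\ref{lem_fct_props} essentially independently, since each is a self-contained computation.

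For part~(i), I would first recall that the map ${\bf r}(p) := p^{1/2}$ is differentiable on $\SC^+_n$ and that its derivative $\rmD{\bf r}(p)h$ is characterized as the unique solution $X \in \SC_n$ of the Sylvester-type (Lyapunov) equation $p^{1/2}X + Xp^{1/2} = h$; this is standard and follows from differentiating the identity $p^{1/2}p^{1/2} = p$. Having this, I would write $\zeta(p) = {\bf r}(p)A(x^*){\bf r}(p)^{-1}$ and apply the product rule together with the formula $\rmD({\bf r}(\cdot)^{-1})(p)h = -p^{-1/2}(\rmD{\bf r}(p)h)p^{-1/2}$, which comes from differentiating ${\bf r}(p){\bf r}(p)^{-1} = I$ and rearranging. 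Collecting terms and abbreviating $X := \rmD{\bf r}(p)h$ gives exactly
\[
  \rmD\zeta(p)h = XA(x^*)p^{-\frac12} - p^{\frac12}A(x^*)p^{-\frac12}Xp^{-\frac12},
\]
as claimed. This part is routine; the only thing to be careful about is justifying differentiability of ${\bf r}$ and of inversion, which follow from the implicit function theorem or from the analyticity of the square root on $\SC^+_n$.

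Part~(ii) is immediate from the definition of $f$: if $P$ is a signed permutation matrix, then the multiset of absolute values $\{|x_i|\}$ is unchanged by $x \mapsto Px$, since $P$ permutes the coordinates up to signs; hence the sorted vector $\widehat{Px}$ equals $\widehat{x}$, and therefore $f(Px) = \sum_{i=1}^{k^*}\log(\widehat{Px})_i = \sum_{i=1}^{k^*}\log\widehat{x}_i = f(x)$. No obstacle here.

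For part~(iii), the hypothesis $x_1 \geq \cdots \geq x_{k^*} > x_{k^*+1} \geq \cdots \geq x_n > 0$ guarantees two things: all components are positive (so no absolute-value kink is active) and there is a strict gap at position $k^*$ (so the ``top-$k^*$'' selection is locally constant). Concretely, in a neighborhood of such $x$, the coordinate-wise absolute value acts as the identity, the sorting permutation stays fixed up to ties within the blocks $\{1,\dots,k^*\}$ and $\{k^*+1,\dots,n\}$, and in all cases $\sum_{i=1}^{k^*}\widehat{y}_i = \sum_{i=1}^{k^*} y_i$ for $y$ near $x$ — the point being that the sum over the top block is symmetric in the block, hence insensitive to how ties are broken. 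Therefore $f(y) = \sum_{i=1}^{k^*}\log y_i$ on this neighborhood, which is plainly $C^\infty$, and differentiating gives $\nabla f(x) = \frac{1}{\ln 2}\bigl(\tfrac1{x_1},\dots,\tfrac1{x_{k^*}},0,\dots,0\bigr) = \sum_{i=1}^{k^*}\frac{e_i\trn}{\ln(2)x_i}$. The main (mild) obstacle is phrasing the locally-constant-sorting argument cleanly, i.e.\ making precise that the value of $f$ near $x$ depends only on the top block of sorted absolute values and that this top-block sum is a smooth function of $y$; once that is said carefully the differentiation is trivial.
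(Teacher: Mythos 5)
Your proposal is correct and follows essentially the same route as the paper: part (i) via the product rule with the derivative of the matrix square root characterized by the Lyapunov equation $p^{\frac{1}{2}}X + Xp^{\frac{1}{2}} = h$ and the derivative of inversion, part (ii) by invariance of the sorted absolute values under signed permutations, and part (iii) by noting that the positivity and the strict gap at position $k^*$ make $f(y) = \sum_{i=1}^{k^*}\log y_i$ locally, so the gradient formula follows by direct differentiation. No gaps to report.
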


\begin{proof}
(i) The differentiability of $\zeta$ follows from the differentiability of the functions ${\bf r}(p) := p^{\frac{1}{2}}$ and ${\bf i}(p) := p^{-1}$ (well-known). The derivatives of these functions satisfy%
\begin{equation*}
  \rmD {\bf i}(p)h = -p^{-1}hp^{-1},\quad p^{\frac{1}{2}}\rmD {\bf r}(p)h + \rmD {\bf r}(p)h p^{\frac{1}{2}} = h.%
\end{equation*}
Now let $\gamma:\R \rightarrow \SC^+_n$ be a $C^1$-curve with $\gamma(0) = p$ and $\dot{\gamma}(0) = h$ for some $h \in \SC_n$. Then%
\begin{align*}
  \rmD\zeta(p)h &= \frac{\rmd}{\rmd \theta}\Bigl|_{\theta=0} \zeta(\gamma(\theta)) = \frac{\rmd}{\rmd \theta}\Bigl|_{\theta=0}{\bf r}(\gamma(\theta))A(x^*) {\bf i} \circ {\bf r}(\gamma(\theta)) \\
	&= \Bigl[\frac{\rmd}{\rmd \theta}\Bigl|_{\theta=0}{\bf r}(\gamma(\theta))\Bigr] A(x^*) p^{-\frac{1}{2}} + p^{\frac{1}{2}}A(x^*)\Bigl[\frac{\rmd}{\rmd \theta}\Bigl|_{\theta=0}{\bf i} \circ {\bf r}(\gamma(\theta))\Bigr] \\
	&= [\rmD r(p)h] A(x^*) p^{-\frac{1}{2}} - p^{\frac{1}{2}} A(x^*) p^{-\frac{1}{2}} [\rmD r(p)h] p^{-\frac{1}{2}}.%
\end{align*}

(ii) Obvious.%

(iii) If $z\in\R^n$ is a vector of sufficiently small norm, then all of the numbers $x_1 + z_1,\ldots,x_n + z_n$ are positive and the numbers $x_1 + z_1,\ldots,x_{k^*} + z_{k^*}$ are strictly larger than each of the numbers $x_{k^*+1} + z_{k^*+1},\ldots,x_n + z_n$. Hence,%
\begin{equation*}
  f(x + z) = \sum_{i=1}^{k^*} \log (x_i + z_i).%
\end{equation*}
This immediately yields the claimed formula for the gradient $\nabla f(x)$.
\end{proof}

We will use a result from \cite[Thm.~7.1]{LSe}. For its formulation, we need the following definition.%

\begin{definition}
Given a Euclidean space $E$, a function $f:E \rightarrow [-\infty,+\infty]$ and a point $x\in E$ at which $f$ is finite, an element $y\in E$ is called a \emph{regular subgradient} of $f$ at $x$ if it satisfies%
\begin{equation*}
  f(x + z) \geq f(x) + \langle y,z \rangle_E + o(z)%
\end{equation*}
with $\lim_{z \rightarrow 0}\|z\|^{-1}o(z) = 0$. An element $y$ of $E$ is called a \emph{limiting subgradient} of $f$ at $x$ if there is a sequence of points $x_n \in E$ converging to $x$ such that $f(x_n) \rightarrow f(x)$ and a sequence of regular subgradients $y_n$ at $x_n$ such that $y_n \rightarrow y$. The set of all limiting subgradients is called the \emph{limiting subdifferential} and is denoted by $\bar{\partial}f(x)$.
\end{definition}

\begin{theorem}\label{thm_lewis_sendov}
Let $f:\R^n \rightarrow [-\infty,+\infty]$ be an absolutely symmetric function. Then the limiting subdifferential of $f \circ \alpha$, with $\alpha$ as defined in \eqref{eq_fct_def}, at a matrix $X$ is given by%
\begin{equation*}
  \bar{\partial} (f \circ \alpha)(X) = \{ U\trn\Diag\, \bar{\partial} f(\alpha(X))V : (U,V) \in \rmO(n,n)^X \},%
\end{equation*}
where%
\begin{equation*}
  \rmO(n,n)^X := \{ (U,V) \in \rmO(n) \tm \rmO(n) : U\trn\Diag(\alpha(X))V = X \}%
\end{equation*}
and $\Diag(x)$ is the diagonal matrix with entries $x_1,\ldots,x_n$ on the diagonal for any $x = (x_1,\ldots,x_n)\in\R^n$.%
\end{theorem}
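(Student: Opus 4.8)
The statement is the singular-value counterpart of Lewis's formula for the subdifferential of a spectral function of a symmetric matrix, and the plan is to first characterize the \emph{regular} (Fr\'echet) subdifferential $\widehat{\partial}(f\circ\alpha)(X)$ --- the set of regular subgradients in the sense of the definition above --- and then to lift the result to the limiting subdifferential by a closure argument. Three ingredients drive the proof. First, \emph{bi-orthogonal invariance}: $\alpha(U\trn X V) = \alpha(X)$ for all $(U,V)\in\rmO(n)\tm\rmO(n)$, so $f\circ\alpha$ is constant on the $\rmO(n)\tm\rmO(n)$-orbit of $X$, which is a smooth embedded submanifold of $\R^{n\tm n}$. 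Second, \emph{von Neumann's trace inequality} $\langle A,B\rangle \leq \langle\alpha(A),\alpha(B)\rangle$ for all matrices $A,B$ (singular values ordered decreasingly), together with Theobald's characterization of its equality case in terms of a common singular value decomposition of $A$ and $B$. Third, \emph{Lewis's block-compatibility} of subgradients of absolutely symmetric functions: every $y\in\widehat{\partial}f(\xi)$ (or $y\in\bar{\partial}f(\xi)$) is compatible with the groups of equal coordinates and the signs of $\xi$, so that $\Diag(y)$ is, in the appropriate sense, block-diagonal with respect to the eigenspace decomposition of $\Diag(\xi)$.

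For the inclusion ``$\subseteq$'', let $Y\in\widehat{\partial}(f\circ\alpha)(X)$. Applying the regular subgradient inequality at points $X' = U\trn X V$ of the orbit of $X$, along which $f\circ\alpha$ is constant, gives $\langle Y, X'-X\rangle \leq o(\|X'-X\|)$ for $X'$ near $X$; since the orbit is a submanifold, $Y$ is orthogonal to its tangent space $T_X = \{XT - SX : S,T\text{ skew-symmetric}\}$. A direct computation shows that $Y\perp T_X$ is equivalent to $X\trn Y$ and $YX\trn$ both being symmetric, and, writing $X = U_0\trn\Diag(\alpha(X))V_0$ with $(U_0,V_0)\in\rmO(n,n)^X$, this forces $Y = U_0\trn\Diag(y)V_0$ for some $y\in\R^n$ --- immediately when the singular values of $X$ are distinct and positive, and after a further reduction inside each ``tied'' block (applying the symmetric-matrix version of the result on the block) in general. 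It then remains to show that $y\in\widehat{\partial}f(\alpha(X))$: restricting the subgradient inequality to diagonal perturbations $Z = U_0\trn\Diag(z)V_0$ and using $\alpha(U_0\trn\Diag(\alpha(X)+z)V_0) = \alpha(X)+z$ for small $z$ compatibly ordered with $\alpha(X)$ --- and symmetrizing over the block permutations by the absolute symmetry of $f$ --- yields $f(\alpha(X)+z)\geq f(\alpha(X)) + \langle y,z\rangle + o(z)$ for all small $z$, which is the claim.

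For ``$\supseteq$'', take $y\in\widehat{\partial}f(\alpha(X))$, $(U_0,V_0)\in\rmO(n,n)^X$, and set $Y := U_0\trn\Diag(y)V_0$. For a perturbation $Z$, the regular subgradient inequality for $f$ at $\alpha(X)$ combined with the $1$-Lipschitz continuity of $\alpha$ gives $f(\alpha(X+Z))\geq f(\alpha(X)) + \langle y,\alpha(X+Z)-\alpha(X)\rangle + o(Z)$, and a first-order analysis of the singular values near $X$ --- the change of the singular values, read in the frame $(U_0,V_0)$ and paired with the block-compatible $y$, equals $\langle Y, Z\rangle$ up to $o(Z)$ --- shows $\langle y,\alpha(X+Z)-\alpha(X)\rangle = \langle Y,Z\rangle + o(Z)$, so $Y\in\widehat{\partial}(f\circ\alpha)(X)$. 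The passage to the limiting subdifferential is then routine in both directions: one approximates $X$ by matrices $X_k\to X$ with $f(\alpha(X_k))\to f(\alpha(X))$ carrying regular subgradients $Y_k$, uses the regular statement to write $Y_k = U_k\trn\Diag(y_k)V_k$, extracts convergent subsequences $(U_k,V_k)\to(U,V)$ and $y_k\to y$, and checks, via continuity of $\alpha$ and closedness of the constraint sets and of $\bar{\partial}f$, that $(U,V)\in\rmO(n,n)^X$ and $y\in\bar{\partial}f(\alpha(X))$, and conversely.

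I expect the main obstacle to be the case of repeated or zero singular values: when $\alpha(X)$ has ties, orthogonality of $Y$ to the orbit pins $Y$ down only to a block-diagonal form in the frame $(U_0,V_0)$, and one must diagonalize each block separately and control how small perturbations re-sort the singular values within a block. This is precisely where Theobald's equality case and Lewis's block-compatibility lemma are indispensable. The bookkeeping can be organized directly, as sketched above, or via the Jordan--Wielandt embedding of $X$ into the $2n\tm 2n$ symmetric matrix whose $(1,2)$- and $(2,1)$-blocks are $X$ and $X\trn$ (with eigenvalues $\pm\alpha_i(X)$), which reduces everything to Lewis's symmetric-matrix theorem but transfers rather than removes the difficulty.
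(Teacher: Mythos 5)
A preliminary remark on the comparison you were asked for: the paper does not prove this statement at all --- it is quoted from Lewis and Sendov \cite[Thm.~7.1]{LSe} and used as a black box, so there is no in-paper argument to measure your sketch against; what you have attempted is a proof of the cited theorem itself. Your outline does follow the standard route of that literature (invariance along the two-sided orthogonal orbit, von Neumann's trace inequality with Theobald's equality case, block-compatibility of subgradients, and a closure argument to pass from regular to limiting subgradients), so the architecture is the right one.

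Two steps, however, do not hold up as written. In the direction ``$\supseteq$'' you assert a first-order equality $\langle y,\alpha(X+Z)-\alpha(X)\rangle = \langle Y,Z\rangle + o(Z)$. This is false when $\alpha(X)$ has repeated or zero entries, because $Z \mapsto \alpha(X+Z)$ is not differentiable there: take $n=2$, $X=0$, $f(x)=\max(|x_1|,|x_2|)$ and $y=(1,0)\in\widehat{\partial}f(0)$; then $\langle y,\alpha(Z)-\alpha(0)\rangle=\alpha_1(Z)$, which by positive homogeneity cannot equal any linear functional of $Z$ up to $o(Z)$. What is true, and all that the argument needs, is the one-sided bound $\langle y,\alpha(X+Z)\rangle \geq \langle U_0\trn\Diag(y)V_0,\,X+Z\rangle$ (von Neumann's trace inequality, after using the absolute symmetry of $f$ to replace $y$ by a suitably ordered and signed representative) together with the exact equality $\langle y,\alpha(X)\rangle = \langle Y,X\rangle$; this yields $\langle y,\alpha(X+Z)-\alpha(X)\rangle \geq \langle Y,Z\rangle$ and hence $Y\in\widehat{\partial}(f\circ\alpha)(X)$ without any perturbation expansion --- and it is how the cited proof actually proceeds. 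Second, in the direction ``$\subseteq$'' the passage from ``$Y$ is orthogonal to the orbit tangent space'' to ``$Y=U_0\trn\Diag(y)V_0$'' is immediate only for distinct positive singular values; for tied blocks, and especially for the zero block, the reduction you invoke (``apply the symmetric-matrix version on the block'') is not the symmetric-eigenvalue situation, since the zero block is acted on by two independent orthogonal factors rather than a single conjugation, and the re-sorting of singular values under small perturbations within a block has to be controlled separately. That is exactly the technical core of \cite{LSe}, and your sketch defers it to the very result being proved. In short: correct strategy and correct statement, but the written argument contains a false intermediate claim in the degenerate case and leaves the genuinely hard part unproved.
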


We will use the above theorem in the following way to compute a subgradient of $J^2_{k^*,x^*} = f \circ \alpha \circ \zeta$ at $p \in \SC^+_n$:%
\begin{itemize}
\item Assume that there exists a regular subgradient $S$ of $f \circ \alpha$ at $X := \zeta(p)$ (this assumption will be justified below). Then we know that%
\begin{equation}\label{eq_reg_sub}
  f(\alpha(X + Z)) \geq f(\alpha(X)) + \tr[ S\trn Z ] + o(Z),\quad \lim_{Z \rightarrow 0}\frac{o(Z)}{\|Z\|} = 0,%
\end{equation}
since $\langle S,Z \rangle = \tr[S\trn Z]$ is the Euclidean inner product in $\R^{n\tm n}$.%
\item Observe that $-f$ is a convex function on the open and convex set $\{ x \in \R^n : x > 0,\ x_{k^*} > x_{k^* + 1} \}$ (its Hessian is positive semidefinite). Then, according to \cite[Prop.~6.2]{LSe}, $f \circ \alpha$ is differentiable at $X$ if and only if $f$ is differentiable at $\alpha(X)$ and its gradient is given by%
\begin{equation*}
   \nabla (f \circ \alpha)(X) = U\trn \Diag(\nabla f(\alpha(X))) V,%
\end{equation*}
where $(U,V) \in \rmO(n) \tm \rmO(n)$ such that $X = U\trn \Diag(\alpha(X))V$. From Lemma \ref{lem_fct_props}(iii), we conclude that the inequality%
\begin{equation}\label{eq_singval_gap}
  \alpha_{k^*}(X) > \alpha_{k^*+1}(X),%
\end{equation}
implies that $f \circ \alpha$ is differentiable at $X$ with%
\begin{equation}\label{eq_euclidean_subgradient}
  \nabla (f \circ \alpha)(X) = \frac{1}{\ln(2)} {U}\trn \Diag\Bigl(\frac{1}{\alpha_1(X)},\ldots,\frac{1}{\alpha_{k^*}(X)},0,\ldots,0\Bigr){V},%
\end{equation}
and hence, in this case the only (regular or limiting) subgradient of $f \circ \alpha$ is the gradient: $S = \nabla(f \circ \alpha)(X)$. Since \eqref{eq_singval_gap} is generically satisfied, we will use formula \eqref{eq_euclidean_subgradient} in the rest of the paper.%
\item We fix a geodesic $\gamma:[0,1] \rightarrow \SC^+_n$ with $\gamma(0) = p$ and define%
\begin{equation*}
  Z(\theta) := \zeta(\gamma(\theta)) - \zeta(p) \in \R^{n \tm n}.%
\end{equation*}
Then, by \eqref{eq_reg_sub}%
\begin{align*}
  J^2_{k^*,x^*}(\gamma(\theta)) &= (f \circ \alpha \circ \zeta)(\gamma(\theta)) = f(\alpha(X + Z(\theta))) \\
	&\geq J^2_{k^*,x^*}(p) + \tr[ S\trn Z(\theta) ] + o(Z(\theta)).%
\end{align*}
Now we write%
\begin{equation*}
  Z(\theta) = \rmD\zeta(p)\dot{\gamma}(0)\theta + o(\theta),\quad \lim_{\theta \rightarrow 0}\frac{o(\theta)}{\theta} = 0,%
\end{equation*}
leading to%
\begin{align*}
  J^2_{k^*,x^*}(\gamma(\theta)) - J^2_{k^*,x^*}(p) \geq \theta \cdot \tr[ S\trn \rmD\zeta(p)\dot{\gamma}(0)] + \tr [S\trn o(\theta)] + o(Z(\theta)).%
\end{align*}
Dividing both sides by $\theta$ and letting $\theta \rightarrow 0^+$ yields%
\begin{equation}\label{eq_sigmakx_ineq}
  \partial_{\dot{\gamma}(0)}J^2_{k^*,x^*}(p) \geq \tr[ S\trn \rmD\zeta(p)\dot{\gamma}(0)],%
\end{equation}
where we use that%
\begin{equation*}
  \lim_{\theta \rightarrow 0}\frac{o(Z(\theta))}{\theta} = \lim_{\theta \rightarrow 0}\frac{o(Z(\theta))}{\|Z(\theta)\|}\frac{\|Z(\theta)\|}{\theta} = 0%
\end{equation*}
from the formula for $Z(\theta)$. Since $\gamma$ was arbitrary, \eqref{eq_sigmakx_ineq} implies%
\begin{equation*}
  \partial_v J^2_{k^*,x^*}(p) \geq \tr[ S\trn \rmD\zeta(p)v ] \mbox{\quad for all\ } v \in \SC_n.%
\end{equation*}
Hence, if we can find some $s_2 \in \SC_n$ such that%
\begin{equation*}
  \tr[S\trn \rmD\zeta(p)v] = \langle s_2,v \rangle_p = \tr[p^{-1}s_2p^{-1}v] \mbox{\quad for all\ } v \in \SC_n,%
\end{equation*}
then (F2) implies that $s_2$ is a subgradient of $J^2_{k^*,x^*}$ at $p$.
\item The existence of $s_2$ is guaranteed by the Riesz representation theorem. A way to construct $s_2$ is to choose an orthonormal basis $\{e_i\}$ of the inner product space $(\SC_n, \langle \cdot,\cdot \rangle_p)$ and put%
\begin{equation}\label{eq_subgradient_finalform}
  s_2 := \sum_i \tr[S\trn \rmD\zeta(p)e_i]e_i.%
\end{equation}
An orthonormal basis can be obtained by applying the Gram-Schmidt process to the standard basis consisting of symmetric matrices with only one nonzero entry, equal to $1$, above the diagonal.%
\end{itemize}

We have solved task (T3.2){, since $s_2$ is the desired subgradient.}%

\subsection{The continuous-time case}

In the continuous-time case, we have to minimize the function%
\begin{equation*}
  \hat{J}:P(d,n) \rightarrow \R_+,\quad \hat{J}(a,p) = \hat{\Sigma}(\rme^{r_a(\cdot)}p).%
\end{equation*}
Recall that $\hat{\Sigma}(P)$, $P \in C^1(K,\SC^+_n)$, was defined via the eigenvalues of the symmetric matrices%
\begin{equation*}
  P(x)^{-\frac{1}{2}}\bigl[P(x)A(x) + A(x)\trn P(x) + \dot{P}(x)\bigr]P(x)^{-\frac{1}{2}},\quad x \in K,%
\end{equation*}
where $A(x) = \rmD F(x)$, see \eqref{eq_ct_sys}. For a metric of the form $P(x) = \rme^{r(x)}p$, this becomes%
\begin{equation*}
  p^{\frac{1}{2}} A(x) p^{-\frac{1}{2}} + p^{-\frac{1}{2}}A(x)\trn p^{\frac{1}{2}} + \dot{r}(x) I.%
\end{equation*}
We can thus compute a subgradient along the following steps:%
\begin{enumerate}
\item[(T1)] Solve the maximization problem%
\begin{equation*}
  \max_{x\in K} \sum_{i=1}^n \max\{0,\zeta_i^{\rme^{r_a(\cdot)}p}(x)\} = \max_{x\in K} \max_{0 \leq k \leq n} \sum_{i=1}^k \zeta_i^{\rme^{r_a(\cdot)}p}(x),%
\end{equation*}
leading to a maximizer $x^* \in K$.%
\item[(T2)] Solve the maximization problem%
\begin{equation*}
  \max_{0 \leq k \leq n} \sum_{i=1}^k \zeta_i^{\rme^{r_a(\cdot)}p}(x^*),%
\end{equation*}
leading to a maximizer $k^* \in \{0,1,\ldots,n\}$.%
\item[(T3)] Compute a subgradient $(s_1,s_2)$ of%
\begin{equation*}
  \hat{J}(a,p) = \underbrace{k^*\dot{r}_a(x^*)}_{=:\hat{J}^1_{k^*,x^*}(a)} + \underbrace{\sum_{i=1}^{k^*} \lambda_i\bigl(p^{\frac{1}{2}} A(x^*) p^{-\frac{1}{2}} + p^{-\frac{1}{2}}A(x^*)\trn p^{\frac{1}{2}}\bigr)}_{=:\hat{J}^2_{k^*,x^*}(p)},%
\end{equation*}
where $\lambda_1(h) \geq \ldots \geq \lambda_n(h)$ denote the eigenvalues of a symmetric matrix $h$.%
\end{enumerate}

The computation of a subgradient for $\hat{J}^1_{k^*,x^*}(a)$ is again simple. For instance, consider the case $n = d = 2$. Then%
\begin{equation*}
  r_a(x) = a_0 + a_1 x_1 + a_2 x_2 + a_{12}x_1x_2 + a_{11} x_1^2 + a_{22}x_2^2.%
\end{equation*}
We thus obtain%
\begin{align*}
  \dot{r}_a(x) &= a_1 \dot{x}_1 + a_2 \dot{x}_2 + a_{12}(\dot{x}_1x_2 + x_1\dot{x}_2) + 2 a_{11} x_1 \dot{x}_1 + 2 a_{22} x_2 \dot{x}_2 \\
	             &= a_1 F_1(x) + a_2 F_2(x) + a_{12}(F_1(x) x_2 + x_1 F_2(x)) + 2 a_{11} x_1 F_1(x) + 2 a_{22} x_2 F_2(x).%
\end{align*}
Hence, the gradient exists and is given by%
\begin{equation*}
  \nabla \hat{J}^1_{k^*,x^*}(a) = {k^* \cdot [ 0,F_1(x^*), F_2(x^*), F_1(x^*) x^*_2 + x^*_1 F_2(x^*), 2x^*_1F_1(x^*), 2x^*_2F_2(x^*) ]}.%
\end{equation*}

For the computation of a subgradient of $\hat{J}^2_{k^*,x^*}(p)$, we write this function as%
\begin{equation*}
  \hat{J}^2_{k^*,x^*} = g \circ \lambda \circ \hat{\zeta},%
\end{equation*}
where%
\begin{align*}
  \hat{\zeta}:\SC^+_n \rightarrow \SC_n,\quad p &\mapsto p^{\frac{1}{2}} A(x^*) p^{-\frac{1}{2}} + p^{-\frac{1}{2}}A(x^*)\trn p^{\frac{1}{2}}, \\
	\lambda:\SC_n \rightarrow \R^n,\quad X &\mapsto (\lambda_1(X),\ldots, \lambda_n(X)), \\
	g:\R^n \rightarrow \R_+,\quad x  &\mapsto \sum_{i=1}^{k^*} \hat{x}_i,%
\end{align*}
where $\hat{x}$ is defined as before. Relying on \cite[Thm.~7.2]{Lew} and \cite[Lem.~6.3]{LSe}, we can compute a Euclidean subgradient of $g \circ \lambda$ at $X := \hat{\zeta}(p)$ as%
\begin{equation*}
   S = U \Diag(\underbrace{1,\ldots,1}_{k^*},\underbrace{0,\ldots,0}_{n-k^*}) U\trn,%
\end{equation*}
where $U$ is an orthogonal matrix such that $X = U \Diag(\lambda_1(X),\ldots,\lambda_n(X)) U\trn$, and assuming that $\lambda_{k^*}(X) > \lambda_{k^* + 1}(X)$. With the same reasoning as in the discrete-time case, we find that a subgradient $s_2$ of $\hat{J}^2_{k^*,x^*}$ at $p$ must satisfy%
\begin{equation*}
  \tr[S\trn \rmD\hat{\zeta}(p)v] = \langle s_2,v \rangle_p = \tr[p^{-1}s_2p^{-1}v] \mbox{\quad for all\ } v \in \SC_n.%
\end{equation*}
The derivative of $\hat{\zeta}$ is given by%
\begin{align*}
  \rmD\hat{\zeta}(p)v = Y A(x^*) p^{-\frac{1}{2}} - p^{\frac{1}{2}}A(x^*)p^{-\frac{1}{2}}Yp^{-\frac{1}{2}} - p^{-\frac{1}{2}}Yp^{-\frac{1}{2}}A(x^*)\trn p^{\frac{1}{2}} + p^{-\frac{1}{2}}A(x^*)\trn Y,%
\end{align*}
where $Y$ is the solution of the Lyapunov equation $p^{\frac{1}{2}}Y + Yp^{\frac{1}{2}} = v$. A subgradient is then given by%
\begin{equation*}
  s_2 = \sum_i \tr[S\trn \rmD\hat{\zeta}(p)e_i]e_i,%
\end{equation*}
where $\{e_i\}$ is an {orthonormal basis} of the inner product space $(\SC_n,\langle\cdot,\cdot,\rangle_p)$.%

\section{Examples}\label{sec_examples}

The most demanding numerical task in all the computations is the maximization (T1). In general, one does not have much information about the function to be maximized that helps finding the maximum. Therefore, we used brute-force search on a cube $\prod_{i=1}^n [a_i,b_i] = K$ in appropriate coordinates. To this end, we generate a regular grid $G\subset K$ with points of the form $(z_1,z_2,\ldots,z_n)$, where%
\begin{equation*}
  z_i = a_i + jh_i,\ h_i=\frac{b_i-a_i}{N_i-1}\ \ \ \text{and}\ \ \ j=0,\ldots,N_i-1 \text{ for all } i=1\ldots,n.%
\end{equation*}
Here, the $N_i$s are numbers that determine the density of the grid. After an $x^*_{\rm pre}\in G$ that maximizes the expression in (T1) on $G$ has been determined, we refine the search for a maximizer $x^*$ on a finer grid on the cube $\left(x^*_{\rm pre} + \prod_{i=1}^n [-h_i/2,h_i/2]\right) \cap K$ around $x^*_{\rm pre}$. In principle, this grid can be chosen independently of $G$, however, we used a scaled down version of $G$. In particular, the refined grid has the same number of points as $G$ if $x^*_{\rm pre} + \prod_{i=1}^n [-h_i/2,h_i/2]\subset K$.%

The algorithms were programmed in C++ using the Armadillo library and run on AMD ThreadRipper 3990X (64 cores@2.9GHz). The code is described and published in \cite{KHG2021SOFTX}. Version v1.1 of the code at https://github.com/shafstein/EntEstSG was used to produce the results in this paper. In all examples, we started with $p=I$ and $r_a=0$. We write numerical values with seven significant digits with two exceptions: for the theoretical value of the restoration entropy of the bouncing ball system and the Lorenz system and their estimates, we use more digits to highlight the difference, since the computed values are correct to 12 and 10 significant digits, respectively. If we use fewer than seven digits, then this is the exact value.%

\subsection{The H\'enon map}

The first example is the H\'enon system with standard parameters $a=1.4$ and $b=0.3$, which is given by%
\begin{align*}
  x(t+1) &= 1.4 - x(t)^2 + 0.3y(t), \\
	y(t+1) &= x(t).%
\end{align*}
It is known that the quadrilateral $K$ with the following corners is a trapping region \cite{Hen}:%
\begin{align*}
  A &= (-1.862,1.96),\quad B = (1.848,0.6267), \\
	C &= (1.743,-0.6533),\quad D = (-1.484,-2.3333),%
\end{align*}
In particular, $K$ is a compact forward-invariant set. We applied our algorithm to $\phi_{|K}$, where $\phi$ is the time-one map of the system and searched for a conformal metric with a polynomial of {maximal} degree 3.%

We have the following theoretical result on the restoration entropy for the H\'enon map from \cite[Thm.~16]{MPo}, which uses the two equilibria $e_{\pm} = (x_{\pm},x_{\pm})$, where%
\begin{equation}\label{eq_32}
  x_{\pm} = \frac{b-1 \pm \sqrt{(b-1)^2 + 4a}}{2}.%
\end{equation}

\begin{theorem}
For any compact forward-invariant set $K$ of $\phi$, we have the estimate%
\begin{equation}\label{eq_33}
  h_{\res}(K) \leq \log \Bigl(\sqrt{x_-^2 + b} - x_-\Bigr).%
\end{equation}
If $a > \frac{3}{4}(1 - b)^2$ and $e_+$ lies in the interior of $K$, then%
\begin{equation}\label{eq_34}
  h_{\res}(K) \geq \log \Bigl(\sqrt{x_+^2 + b} + x_+\Bigr).%
\end{equation}
If the intersection of the unstable manifold of $e_-$ with a sufficiently small neighborhood of $e_-$ lies in $K$, then \eqref{eq_34} holds with equality.%
\end{theorem}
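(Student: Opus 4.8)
\emph{Setup.} For $z=(x,y)$ the Jacobian $A(z):=\rmD\phi(z)$ is the matrix with rows $(-2x,\,b)$ and $(1,\,0)$, so $\det A(z)\equiv-b$ and $|\det\rmD\phi^t|=b^t<1$. Since $n=2$, we have $\alpha_1^P(1,z)\,\alpha_2^P(1,z)=(\det P(\phi z)/\det P(z))^{1/2}b$, so whenever $\det P$ is not too expansive along $\phi$ (which we can arrange for the conformal metrics $P=\rme^{r}p$ below) one has $\Sigma^P(z)=\max\{0,\log\alpha_1^P(1,z)\}$, and the whole problem reduces to controlling the top singular value of a conjugated Jacobian. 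At the fixed point $e_\pm=(x_\pm,x_\pm)$ the eigenvalues of $A(e_\pm)$ are the roots of $\lambda^2+2x_\pm\lambda-b=0$, namely $-x_\pm\pm\sqrt{x_\pm^2+b}$; being $b>0$, they are real and of opposite sign, and the one of larger modulus equals $\sqrt{x_+^2+b}+x_+$ at $e_+$ and $\sqrt{x_-^2+b}-x_-$ at $e_-$.

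\emph{The lower bound.} I would obtain \eqref{eq_34} from the variational formula $h_{\res}(\phi,K)=\inf_{P\in C^0(K,\SC^+_2)}\max_{z\in K}\Sigma^P(z)$ of \cite[Thm.~5]{KPM}. Fix any metric $P$. Since $\phi(e_+)=e_+$, the matrix $B(e_+)=P(e_+)^{1/2}A(e_+)P(e_+)^{-1/2}$ is conjugate to $A(e_+)$ and hence has the same eigenvalues, so Weyl's inequality \eqref{eq_weyl} gives $\log(\sqrt{x_+^2+b}+x_+)\le\log\alpha_1^P(1,e_+)$. The assumption $a>\tfrac34(1-b)^2$ is exactly what makes $\sqrt{x_+^2+b}+x_+>1$: it is equivalent to $\sqrt{x_+^2+b}>1-x_+$, whose square (the case $x_+\ge1$ being trivial) reads $b-1>-2x_+$, i.e.\ $x_+>\tfrac{1-b}{2}$, which for the root $x_+=\tfrac{-(1-b)+\sqrt{(1-b)^2+4a}}{2}$ rearranges to $4a>3(1-b)^2$. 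Therefore $\Sigma^P(e_+)\ge\log(\sqrt{x_+^2+b}+x_+)$, and since $e_+\in K$ this forces $\max_{z\in K}\Sigma^P(z)\ge\log(\sqrt{x_+^2+b}+x_+)$; taking the infimum over $P$ yields \eqref{eq_34}. (The ``interior'' hypothesis serves only to bring $K$ under the standing assumption $K=\overline{\inner\,K}$ of \cite{KPM}.)

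\emph{The upper bound.} Set $\rho:=\sqrt{x_-^2+b}-x_-$. The plan is to exhibit a conformal metric $P=\rme^{r}p\in\mathrm{C}_d(K)$ with $\Sigma^P(z)\le\log\rho$ for all $z\in K$, and to conclude by \eqref{eq_resent_ub}. With $g:=P^{1/2}$ this is the one-step estimate $\|g(\phi z)\,A(z)\,g(z)^{-1}\|\le\rho$ for all $z\in K$ — a pointwise matrix inequality in $p$ and the coefficients of $r$. Iterated, it is equivalent to a uniform bound $\prod_{s<t}|\xi_s|\le C\rho^{t}$ on the ratio variable $\xi_t=u_{t+1}/u_t$ of the scalar recursion $u_{t+1}=-2x_t u_t+b\,u_{t-1}$ that governs $\rmD\phi^t w$ (here $x_t$ is the first coordinate of $\phi^t z$, and $\xi_t=-2x_t+b/\xi_{t-1}$); the extremal orbit is the fixed one at $e_-$, along which $\xi_s\to\rho$, the attracting fixed point of $\xi\mapsto-2x_-+b/\xi$, so the bound cannot be improved. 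Carrying this out — equivalently, solving the above matrix inequality for a polynomial $r$ and some $p\in\SC^+_2$, which amounts to a Bochi--Navas ``adapted metric'' construction tailored to the H\'enon family (cf.\ the argument behind \eqref{eq_resent_form2} and the explicit metrics in \cite[Sec.~8]{MP2}) — is where the genuine work lies, and I expect it to be the main obstacle. A convenient reduction is that, by monotonicity of $h_{\res}$ under inclusion of forward-invariant sets, it suffices to take $K$ to be the closure of the set of points with bounded forward orbit.

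\emph{The equality case.} If the local unstable manifold of $e_-$ is contained in $K$, then in particular $e_-\in K$ (a fixed point lies in its own unstable manifold), and — since $K$ is forward-invariant — $K$ even contains the closure of all of $W^u(e_-)$. Running the fixed-point argument of the previous step at $e_-$ in place of $e_+$ now gives $h_{\res}(\phi,K)\ge\log\rho$. Here $\rho>1$ for $a$ in the range where $e_-$ is a saddle (automatic for the standard parameters), and in fact $\rho=\sqrt{x_-^2+b}-x_->\sqrt{x_+^2+b}+x_+$ always, because $x_-+x_+=b-1<0$ gives $-x_->x_+$ while $t\mapsto\sqrt{t^2+b}+t$ is increasing. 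Combining this lower bound with the upper bound \eqref{eq_33} pins down $h_{\res}(\phi,K)=\log(\sqrt{x_-^2+b}-x_-)$, i.e.\ the estimate \eqref{eq_33} is attained. (The route that truly exploits the whole local unstable manifold, rather than just the point $e_-$, is a direct lower bound: a piece of $W^u(e_-)\subset K$ carries, for every $t$, orbit segments along which $\rmD\phi^t$ expands at rate arbitrarily close to $\rho^{t}$, so a covering/volume count shows that regular observability forces a transmission rate of at least $\log\rho$.)
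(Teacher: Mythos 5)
The first thing to note is that the paper itself contains no proof of this statement: it is quoted from \cite{MPo} (Thm.~16), so the only "proof" in the paper is that citation, and your attempt has to be judged on its own. Judged that way, it has a genuine gap exactly where the theorem's main content lies. The upper bound \eqref{eq_33} is the one assertion made for an \emph{arbitrary} compact forward-invariant set $K$, and you do not prove it: you reduce it to exhibiting a conformal metric with $\Sigma^P(z)\le\log\bigl(\sqrt{x_-^2+b}-x_-\bigr)$ on $K$ and then explicitly defer this ("where the genuine work lies"). What is missing is concrete and twofold: (i) a localization statement that every compact forward-invariant set of the H\'enon map is contained in the strip $\{|x|\le -x_-\}$ (a Devaney--Nitecki-type trapping estimate; your remark that one may pass to the set of points with bounded forward orbits presupposes, rather than proves, such a bound), and (ii) an explicit adapted metric. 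Both are elementary once stated: the constant metric $p=\mathrm{diag}(1,b)$ makes $p^{\frac12}A(z)p^{-\frac12}$ \emph{symmetric} with eigenvalues $-x\pm\sqrt{x^2+b}$, so on the strip $|x|\le -x_-$ one gets $\alpha_1^P(z)\le \sqrt{x_-^2+b}-x_-$ and $\alpha_2^P(z)\le\sqrt b<1$, and \eqref{eq_resent_ub} finishes the argument with no Bochi--Navas construction and no polynomial $r$ at all. Since neither ingredient is carried out, \eqref{eq_33}, and with it your equality case (which leans on \eqref{eq_33}), remains unproven.

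Two secondary points. First, your equality case concludes $h_{\res}(K)=\log\bigl(\sqrt{x_-^2+b}-x_-\bigr)$, i.e.\ equality in \eqref{eq_33}, whereas the statement as printed asserts equality in \eqref{eq_34}; your reading is the mathematically coherent one (under the hypothesis one has $h_{\res}\ge\log(\sqrt{x_-^2+b}-x_-)>\log(\sqrt{x_+^2+b}+x_+)$), but the discrepancy with the quoted wording should at least be flagged. Second, your lower-bound arguments at the fixed points invoke the variational formula of \cite{KPM}, which requires $K=\overline{\inner\,K}$ and invertibility of $\rmD\phi$; the hypothesis "$e_+\in\inner\,K$" does not give the former, and a fixed point or a curve piece in $K$ certainly does not. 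For \eqref{eq_34} this is repairable by passing to $K':=\overline{\bigcup_{t\ge0}\phi^t(B)}\subset K$ for a small ball $B\ni e_+$ and using monotonicity of $h_{\res}$ under inclusion; for the equality case the robust route is the direct covering/length-growth argument along the forward iterates of the local unstable manifold of $e_-$, which you mention only in a parenthesis but which is the argument that actually applies when $K$ need not have interior near $e_-$. Otherwise the Weyl/conjugation argument at $e_+$, and the verification that $a>\tfrac34(1-b)^2$ is exactly $\sqrt{x_+^2+b}+x_+>1$, are correct.
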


Computing the upper estimate \eqref{eq_33} for the standard parameters yields%
\begin{equation*}
  h_{\res}(K) \leq 1.704793. 
\end{equation*}
The condition $a = 1.4 > \frac{3}{4}(1 - 0.3)^2$ is satisfied and moreover, it can easily be checked that $e_+$ lies in the interior of the trapping region $K$, hence%
\begin{equation*}
  h_{\res}(K) \geq  0.9439130. 
\end{equation*}
However, $e_-$ is outside of $K$, and thus it is not guaranteed that this estimate holds with equality {for the trapping region $K$}.%

In our computations, we set $N_1 = N_2 =$ 1,000 and $t_k = 16/k$. Our ansatz for the metric is $(x,y) \mapsto \rme^{r_a(x,y)}p$ with%
\begin{equation*}
  r_a(x,y) = a_1 x + a_2y  + a_{11}x^2 + a_{12}xy+ a_{22}y^2 + a_{112}x^2y + a_{122}xy^2 + a_{111}x^3 + a_{222}y^3.%
\end{equation*}

We started with $p=I$ and $r_a(x,y)=0$ and performed 4,000 iterations in 359\rm{s}, cf.~Figure \ref{fig1}. The best estimate of the restoration entropy was%
\begin{equation*}
  \hat{h}_{\res}(K) = 1.429359 
\end{equation*}
obtained in iteration 3,759 with%
\begin{equation*}
  p = \begin{pmatrix}
    1.631590  &-0.02982056  \\
-0.02982056 & 0.6134442
  \end{pmatrix}
\end{equation*}
and%
\begin{align*}
  r_a(x,y)&=0.1153877   x + 0.9776185 y+1.007788027   x^2 -0.05841912  xy
 +0.2099100   y^2\\ &\ \ \ -0.4110473  x^2y+ 0.05026169  xy^2 +0.01754927  x^3-0.007827144  y^3.
\end{align*}
\begin{figure}[ht]
   \centering
     \includegraphics[scale=0.23]{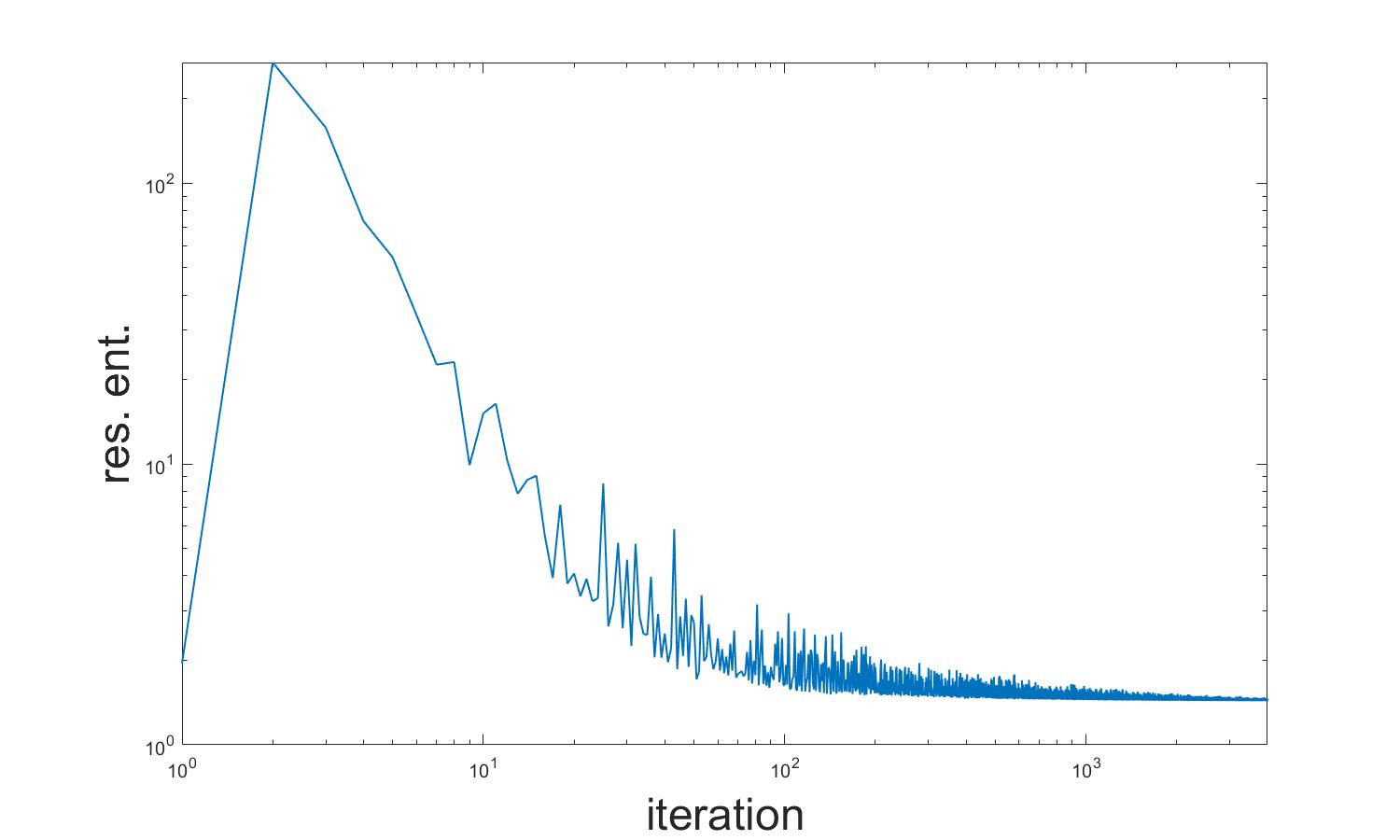}
    \caption{Restoration entropy for the H\'{e}non map as a function of iterations 1 to 4,001 on a loglog plot. The initial estimate for the restoration entropy with $p=I$ and $r_a(x,y)=0$ is 1.951141.} 
   \label{fig1}
  \end{figure}

{\bf Conclusion:} The best value obtained as an estimate for $h_{\res}(K)$ lies in the interval given by the theoretical upper and lower bounds. The estimate grows considerably in the first step and there are large variations in the sequence, that is not monotonically falling, although there is a clear trend downward.%

\subsection{Harmonically forced bouncing ball system}\label{BBsystsec}

The second example is a harmonically forced bouncing ball system, as discussed in \cite[Sec.~5]{MPo}. This system is given by the equations%
\begin{align*}
  x_1(t+1) &= x_1(t) + x_2(t), \\
	x_2(t+1) &= \gamma x_2(t) - \delta\cos(x_1(t) + x_2(t)),%
\end{align*}
where $\gamma \in (0,1)$ and $\delta>0$ are {positive} parameters. The physical meaning of these parameters is explained in more detail in \cite{MPo}. Due to the invariance under the transformation $x_1 \mapsto x_1 + 2\pi k$, $k\in\Z$, the state space of the system is typically taken to be the cylinder $\mathrm{S}^1 \tm \R$. The system has the trapping region $K := \mathrm{S}^1 \tm [-\delta(1-\gamma)^{-1},\delta(1-\gamma)^{-1}]$. For this set, \cite[Thm.~15]{MPo} yields%
\begin{equation*}
  h_{\res}(K) = \log\left(1 + \gamma + \delta + \sqrt{(1 + \gamma + \delta)^2 - 4 \gamma}\right) - 1.%
\end{equation*}

In our numerical case study, we put $\gamma=0.1$ and $\delta=2$, for which%
\begin{equation*}
  h_{\res}(K) = 1.617015883755.%
\end{equation*}
In our computations, we set $N_1=N_2=$ 1,000 and $t_k=1/k$. Since it has been shown in \cite[Proof of Thm.~15]{MPo} that a constant metric suffices for this system, our ansatz for the metric is $p$, i.e.~$(x,y)\mapsto \rme^{r_a(x,y)}p$ with $r_a(x,y) = 0$.%

We started with $p=I$ and performed 40 iterations in 2.2\rm{s}. The best estimate of the restoration entropy was%
\begin{equation*}
  \hat{h}_{\res}(K) = 1.617015883762 
\end{equation*}
obtained in iteration 31 with%
\begin{equation*}
p=\begin{pmatrix}
    1.362257 & 0.1134348 \\
    0.1134348 & 0.7435217  \\
  \end{pmatrix}
\end{equation*}.

\begin{figure}[ht]
   \centering
     \includegraphics[scale=0.23]{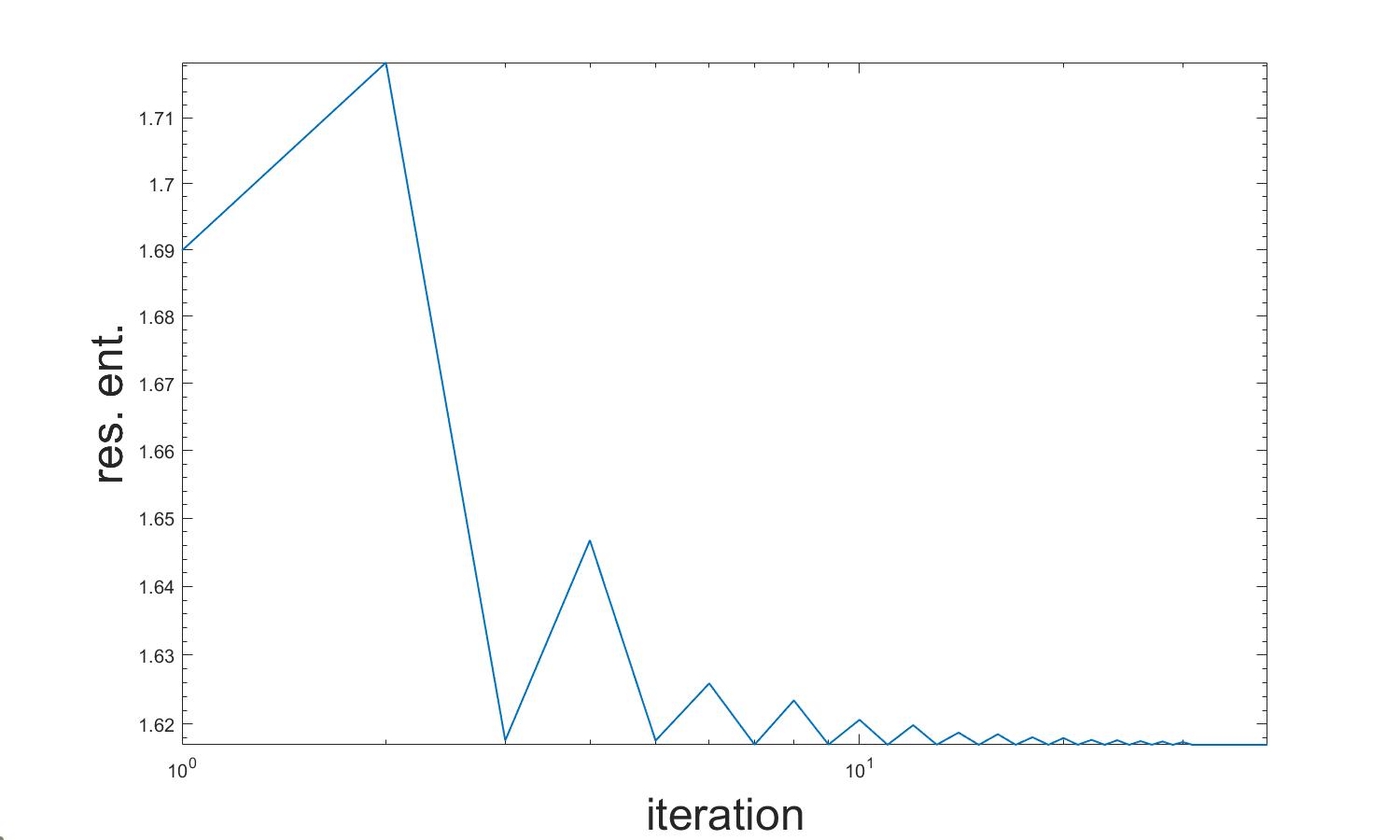}
    \caption{Restoration entropy for bouncing ball as a function of iterations 1 to 41 on a loglog plot. The initial estimate of the restoration entropy with $p=I$ and $r_a(x,y)=0$ is $1.689883$.  
     }
   \label{fig4}
\end{figure}

{\bf Conclusion:} The best value obtained as an estimate for $h_{\res}(K)$ is correct to 12 significant digits and the sequence obtained by the algorithm converges quickly.%

\subsection{The Lorenz system}

The third example is the three-dimensional continuous-time Lorenz system. This system is given by the equations %
\begin{align*}
  \dot x_1 &= \sigma(x_2 - x_1) \\
  \dot x_2 &= x_1(\rho-x_3)-x_2 \\
  \dot x_3 &= x_1x_2-\beta x_3
\end{align*}
where $\sigma$, $\rho$ and $\beta$ are parameters. We chose the standard values%
\begin{equation*}
  \sigma = 10,\ \ \ \rho=28\ \ \ \text{and}\ \ \ \beta=\frac{8}{3}.%
\end{equation*}
From \cite[Sec.~II.2.2]{BLR}, we know that the closed {ball} $K$ centered at $(0,0,\sigma+\rho)$ with radius $\sqrt{\beta/2}\,(\sigma+\rho)$ is forward-invariant for the system. Further, we know from \cite[Thm.~15]{MP2} and \cite[Thm.~4.3]{PMa} that%
\begin{equation*}
  h_{\res}(K) = \frac{1}{2\ln(2)} \left(\sqrt{(\sigma-1)^2+4\rho\sigma}-(\sigma+1)\right) = 17.063797967999616.
\end{equation*}
{Moreover, it can be shown by a direct computation that the metric}
\begin{align}\label{eq_lorenz_optimal_metric}
  P(x,y,z) &= \rme^{r(x,y,z)} \left( \begin{array}{ccc} \frac{\rho\sigma + (b-1)(\sigma - 1)}{\sigma^2} & -\frac{b-1}{\sigma} & 0 \\[0.2cm] -\frac{b-1}{\sigma} & 1 & 0 \\[0.2cm] 0 & 0 & 1 \end{array}\right)
\end{align}
where $r(x,y,z)$ is the quadratic polynomial%
\begin{align*}
  r(x,y,z) &= a \theta \left( \gamma_1 x^2 + \gamma_2 \left( y^2 + z^2 + \frac{(b-1)^2}{\sigma^2}x^2\right) + \gamma_3 z \right)
\end{align*}
with the constants%
\begin{align*}
  a &= \frac{\sigma}{\sqrt{r\sigma+(b-1)(\sigma-b)}} = 0.5849832,\\ 
  \theta &= \frac{1}{2\sqrt{(\sigma+1-2b)^2+(2\sigma/a)^2}} = 0.01442775,\\ 
	\gamma_3 &= -4\frac{\sigma}{ab} = 25.64176 ,\\ 
  \gamma_2 &= \frac{a}{2} = 0.2924916,\\ 
  \gamma_1 &= -\frac{2(\gamma_2/\sigma)(r\sigma - (b-1)^2) + \gamma_3 + (2/\sigma)a(b-1)}{2\sigma} = 0.4614867, 
\end{align*}
realizes this value. {The function $r$ was proposed in \cite[Sec.~IV.9.3]{BLR} as a Lyapunov-type function used in the estimation of the Lyapunov dimension of invariant sets for the Lorenz system.}

In the search for the maximum in (T1), we used spherical coordinates {for the ball with center $(0,0,\sigma+\rho)$} and $N_1 = 500$ for the radial distance $0$ to $\sqrt{\beta/2}\,(\sigma+\rho)$, $N_2=50$ for the azimuthal angle $0$ to $\pi$  and $N_3 = 100$ for the polar angle $0$ to $2\pi$. We set $t_k=2/k$ and our ansatz for the metric is $(x,y)\mapsto \rme^{r_a(x,y,z)}p$ with%
\begin{equation*}
  r_a(x,y,z)=a_1 x + a_2y+a_3z+a_{12}xy+a_{23}yz+a_{13}xz+a_{11}x^2+a_{22}y^2+a_{33}z^2.
\end{equation*}
We started with $p=I$ and $r_a(x,y,z)=0$ and performed 4,000 iterations in 912\rm{s}. The best estimate of the restoration entropy was%
\begin{equation*}
  \hat{h}_{\res}(K) = 17.06379797224715 
\end{equation*}
i.e., the theoretical value with ten correct significant digits, obtained in iteration 3,538 with%
\begin{equation*}
p=\begin{pmatrix}
1.633503   &0.1158726   &-0.00001799334  \\
0.1158726 &0.6206277  &0.00001341403   \\
-0.00001799334 &0.00001341403  &0.9996291
  \end{pmatrix}
\end{equation*}
and%
\begin{align*}
  r_a(x,y,z)&=10^{-5}\big(40.45413  \, x - 35.97656  \,y-21640.79  \,z-216.4097  \,xy\\
  &\ \ -21.43891  \, yz+1.316793  \,xz+405.8675  \,x^2+327.2187  \,y^2+304.7850 \,z^2\big).
\end{align*}
 \begin{figure}[ht]
   \centering
   \includegraphics[scale=0.23]{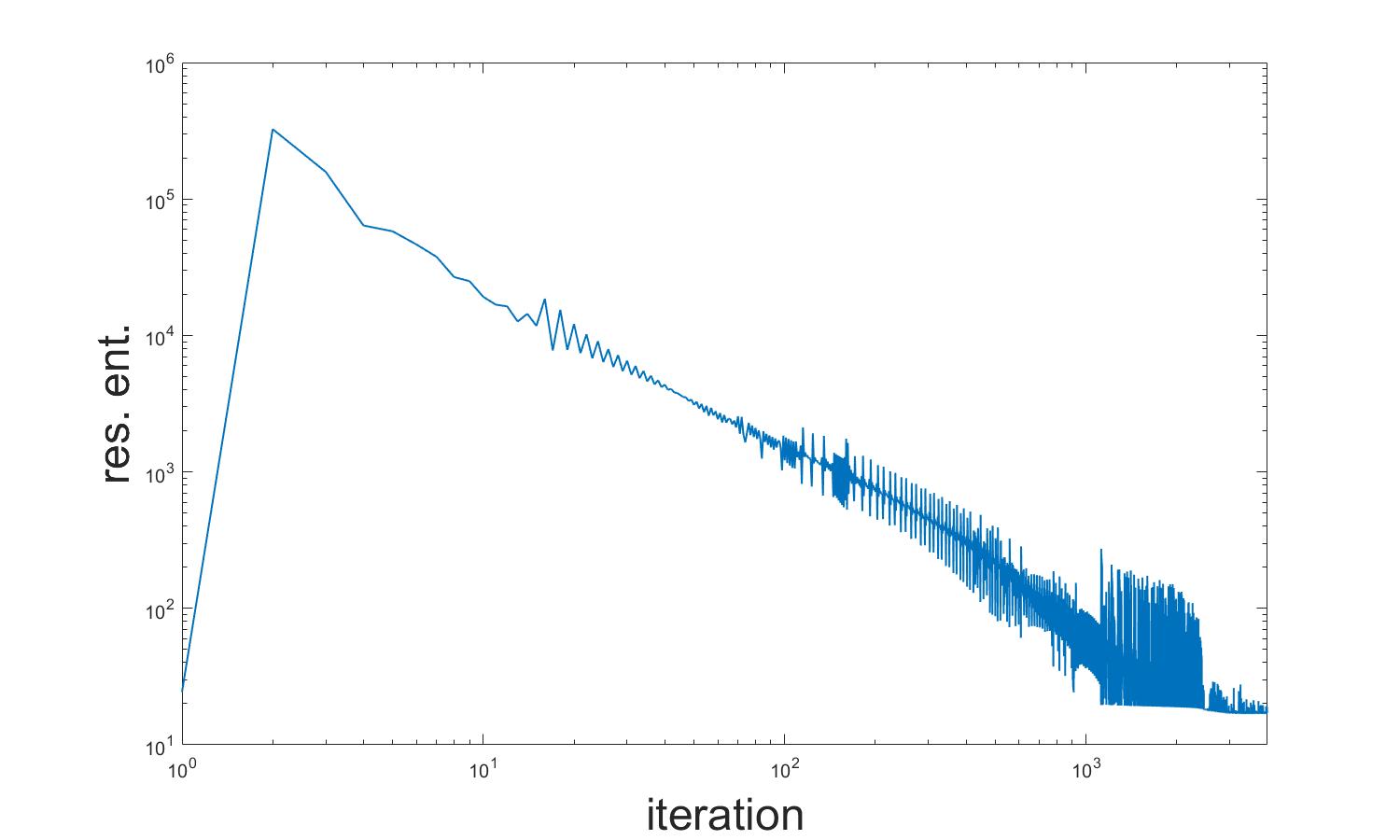}
    \caption{Restoration entropy for the Lorenz system as a function of iterations 1  to 4,001 on a loglog plot. The initial estimate for the restoration entropy with $p=I$ and $r_a(x,y,z)=0$ is $24.37586$. } 
   \label{figLorenz1}
  \end{figure}

{\bf Conclusion:} We obtain the theoretical value with ten significant digits in less than 4,000 iterations. However, the obtained metric is quite different from the one in \eqref{eq_lorenz_optimal_metric}, which with our values for the parameters is%
\begin{equation*}
  P(x,y,z) =\rme^{r(x,y,z)}\left(\begin{array}{ccc} 2.95 & -0.1666667& 0 \\ -0.1666667& 1 & 0 \\ 0 & 0 & 1 \end{array}\right)%
\end{equation*}
with%
\begin{align*}
  r(x,y,z) \approx 10^{-2}\big(-21.64162\, z + 0.3963516\, x^2 +0.2468626\, y^2+0.2468626\, z^2\big).%
\end{align*}
However, in both metrics the term $z$ in the polynomial dominates.

\section{Conclusions and future work}\label{sec_future}

In this paper, we have introduced a numerical algorithm to compute upper bounds for the restoration entropy of a dynamical system. This quantity characterizes the smallest channel capacity (or data rate) above which the system can be regularly or finely observed over a digital channel by a remote observer. Our algorithm also delivers a conformal Riemannian metric which can be used for the explicit design of an observer.%

There are a number of open questions about the proposed subgradient algorithm, leading to topics for future work:%
\begin{itemize}
\item It is an open question in which cases conformal metrics are sufficient to approximate the restoration entropy. The fact that the algorithm works so well for the tested examples might be related to the low dimensions of their state spaces or particular dynamical properties. Hence, one should try to extend the algorithm to larger classes of Riemannian metrics, which can still be described by finitely many parameters.%
\item The ordinary subgradient algorithm on Euclidean space can be improved in several ways. For instance, \emph{bundle methods} \cite{Lea} allow to obtain better convergence properties by using more local information about the function to be minimized in order to obtain directions of descent (instead of arbitrary subgradients {which, in general, are not directions in which the function decreases, as can be seen clearly from our plots in Section \ref{sec_examples}}). Possibly, such methods can be extended to the subgradient algorithm on Riemannian manifolds.%
\item If one is only interested in the computation of restoration entropy (and not on the computation of an associated Riemannian metric on $K$), one can reduce the complexity of the maximization task involved in our algorithms by first computing an approximation of the recurrent set within $K$. Indeed, this follows from \cite[Cor.~A.8]{Mor}, which shows that the maximal growth rate of a subadditive cocycle is attained at a recurrent point. The recurrent set is contained in the chain-recurrent set and for the outer approximation of the latter, there exist well-established algorithms \cite{DJ}.%
\item We can reduce the dimension of the parameter space $P(d,n)$ by observing that%
\begin{equation*}
  \Sigma(sP) = \Sigma(P) \mbox{\quad for all\ } s > 0,\ P \in C^0(K,\SC^+_n).%
\end{equation*}
Hence, we can always set the constant term in the polynomial $r_a(x)$ equal to zero and, moreover, we can require that $\det p = 1$ (or any other constant value). This reduces the dimension of the parameter space by $2$. Observe that the reduced parameter space is still geodesically convex, since convex combinations of polynomials with vanishing constant term also have vanishing constant term and $\det p = \det q = 1$ implies {$\det(p \#_t\, q) = 1$ for all $t\in[0,1]$}. A further reduction of the parameter space dimension might be possible by ``ignoring'' or ``factoring out'' the matrices $p \in \SC^+_n$ with%
\begin{equation*}
  p^{\frac{1}{2}}A(x)p^{-\frac{1}{2}} = A(x) \mbox{\quad for all\ } x \in K.%
\end{equation*}
However, it is not so clear how to do this formally.%
\item One of the drawbacks of the proposed algorithm is that it involves a nonlinear maximization problem, which usually does not have good properties such as convexity. It is thus an important question for future investigations how a lack of accuracy in the solution of this maximization problem affects the result of the subgradient algorithm. In our examples, the estimate was not overly sensitive to the density of the grid, where we searched for the maximum, given that it was reasonably high.%
\item Our algorithm seems to have a great potential to be applicable to other problems, including the computation of contraction metrics for exponentially stable equilibria and periodic orbits \cite{Gie15,Gie19}, the estimation of the dimension of invariant sets {\cite{BLR,PN}}, and the approximation of extremal Lyapunov exponents \cite{Sko}. We leave the {study} of such extensions to future investigations.%
\end{itemize}


\appendix

\section{A lemma on sectional curvature}

{
To guarantee that the subgradient algorithm converges, we need to make sure that the product manifold $\R^N \tm \SC^+_n$ has sectional curvature bounded from below. Obviously, the Euclidean factor $\R^N$ has vanishing sectional curvature. It is further well-known that the sectional curvature of $\SC^+_n$ is uniformly bounded from below. The following lemma thus guarantees the desired curvature bound for $\R^N \tm \SC^+_n$. We assume that the reader is familiar with basic concepts and notation used in Riemannian geometry (two standard references are \cite{DCa,GHL}).}%

{
\begin{lemma}\label{lem_product_curv}
Let $M_1,M_2$ be two Riemannian manifolds of non-positive sectional curvature and let $M := M_1 \tm M_2$ be equipped with the product metric. If $k_1,k_2 \leq 0$ are lower bounds on the sectional curvature of $M_1$ and $M_2$, respectively, then $k_1 + k_2$ is a lower bound on the sectional curvature of $M$.
\end{lemma}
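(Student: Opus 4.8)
The plan is to reduce everything to the well-known splitting of the Riemannian curvature tensor of a product. For the product metric, the Levi-Civita connection of $M = M_1 \tm M_2$ is the direct sum of the connections of the two factors, so at any point $(x_1,x_2)\in M$, writing a tangent vector as $u=(u_1,u_2)\in T_{x_1}M_1\oplus T_{x_2}M_2$ (and similarly $v,w$), the curvature tensor decomposes as $R(u,v)w = \bigl(R^1(u_1,v_1)w_1,\ R^2(u_2,v_2)w_2\bigr)$, where $R^i$ is the curvature tensor of $M_i$; in particular all ``mixed'' contributions vanish (see e.g.~\cite{DCa,GHL}). I would first record the consequence that, for a $2$-plane spanned by two vectors that we may take to be orthonormal in the product metric, $u,v$,
\begin{equation*}
  K_M(u,v) = \langle R(u,v)v,u\rangle = \langle R^1(u_1,v_1)v_1,u_1\rangle + \langle R^2(u_2,v_2)v_2,u_2\rangle.
\end{equation*}

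Next I would bound each summand from below by the curvature bound of the corresponding factor. For $a,b\in T_{x_i}M_i$ put $A_i(a,b):=|a|^2|b|^2 - \langle a,b\rangle^2\geq 0$ (Cauchy--Schwarz). If $a,b$ are linearly independent then $\langle R^i(a,b)b,a\rangle = K_{M_i}(a,b)\,A_i(a,b)\geq k_i A_i(a,b)$ since $K_{M_i}\geq k_i$ and $A_i(a,b)>0$; if they are dependent both sides vanish, so $\langle R^i(a,b)b,a\rangle\geq k_i A_i(a,b)$ always. Hence
\begin{equation*}
  K_M(u,v)\geq k_1 A_1(u_1,v_1) + k_2 A_2(u_2,v_2).
\end{equation*}
Because $u,v$ are orthonormal in the product metric, $|u_1|^2+|u_2|^2 = 1$ and $|v_1|^2+|v_2|^2=1$, so $|u_i|^2\leq 1$ and $|v_i|^2\leq 1$; combined with $A_i(u_i,v_i)\leq |u_i|^2|v_i|^2$ this gives $0\leq A_i(u_i,v_i)\leq 1$.

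Finally, since $k_1,k_2\leq 0$ and $0\leq A_i(u_i,v_i)\leq 1$, one has $k_i A_i(u_i,v_i)\geq k_i$ for $i=1,2$, and therefore $K_M(u,v)\geq k_1+k_2$. As the plane was arbitrary, $k_1+k_2$ is a lower bound for the sectional curvature of $M$. I do not expect a genuine obstacle here: the curvature-splitting formula is standard and can be quoted, and the only points needing a little care are the reduction to an orthonormal frame (so the Gram determinant in the denominator of the sectional curvature equals $1$) and the sign bookkeeping in the last step, where the hypotheses $k_i\leq 0$ are exactly what is needed to pass from $A_i\leq 1$ to $k_i A_i\geq k_i$.
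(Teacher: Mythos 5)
Your proposal is correct and follows essentially the same route as the paper: split the curvature tensor of the product, express the sectional curvature of an orthonormally spanned plane as a sum of the two factor contributions weighted by Gram determinants $A_i = |u_i|^2|v_i|^2 - \langle u_i,v_i\rangle^2 \in [0,1]$, and use $k_i \le 0$ to conclude $K_M \ge k_1 + k_2$. Your uniform treatment of the degenerate (linearly dependent) cases is in fact slightly cleaner than the paper's explicit four-case analysis, and you correctly include the square on $\langle u_i,v_i\rangle$ in the Gram determinant.
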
}

{
\begin{proof}
It is well-known that the Riemannian curvature tensor of $M$ satisfies the identity%
\begin{equation}\label{eq_ct_id}
  R(X_1+X_2,Y_1+Y_2,Z_1+Z_2,W_1+W_2) = R_1(X_1,Y_1,Z_1,W_1) + R_2(X_2,Y_2,Z_2,W_2),%
\end{equation}
for any $X_i,Y_i,Z_i$ and $W_i$, where $R_i$ is the curvature tensor of $M_i$, $i = 1,2$.
Now, fix $p = (p_1,p_2) \in M$ and a $2$-dimensional subspace $\Pi \subset T_pM = T_{p_1}M_1 \tm T_{p_2}M_2$. Let $(X,Y)$ be an orthonormal basis of $\Pi$ and split $X = (X_1,X_2)$, $(Y_1,Y_2)$ with $X_1,Y_1 \in T_{p_1}M_1$ and $X_2,Y_2 \in T_{p_2}M_2$. From \eqref{eq_ct_id}, it follows that the sectional curvature of $\Pi$ satisfies%
\begin{equation*}
  K(\Pi) = R(X,Y,Y,X) = R_1(X_1,Y_1,Y_1,X_1) + R_2(X_2,Y_2,Y_2,X_2).%
\end{equation*}
Now we distinguish four cases:%
\begin{enumerate}
\item[(i)] $X_1,Y_1$ are linearly independent and $X_2,Y_2$ are linearly independent. In this case, let $\Pi_i$ be the span of $X_i,Y_i$ for $i=1,2$. Then%
\begin{equation}\label{eq_c1}
  K(\Pi) = (|X_1|^2 |Y_1|^2 - \langle X_1,Y_1 \rangle) K(\Pi_1) + (|X_2|^2 |Y_2|^2 - \langle X_2,Y_2 \rangle) K(\Pi_2).%
\end{equation}
\item[(ii)] $X_1,Y_1$ are linearly independent, but $X_2,Y_2$ are not. Then%
\begin{equation}\label{eq_c2}
  K(\Pi) = (|X_1|^2 |Y_1|^2 - \langle X_1,Y_1 \rangle) K(\Pi_1).%
\end{equation}
\item[(iii)] $X_2,Y_2$ are linearly independent, but $X_1,Y_1$ are not. Then%
\begin{equation}\label{eq_c3}
  K(\Pi) = (|X_2|^2 |Y_2|^2 - \langle X_2,Y_2 \rangle) K(\Pi_2).%
\end{equation}
\item[(iv)] Neither $X_1,Y_1$ nor $X_2,Y_2$ are linearly independent. Then%
\begin{equation}\label{eq_c4}
  K(\Pi) = 0.%
\end{equation}
\end{enumerate}
By assumption we have $k_1 \leq K(\Pi_1) \leq 0$ and $k_2 \leq K(\Pi_2) \leq 0$. Since $|X| = |Y| = 1$, we further have $|X_i|,|Y_i| \leq 1$ for $i = 1,2$. Then%
\begin{equation*}
  0 \leq |X_i|^2 |Y_i|^2 - \langle X_i,Y_i \rangle = |X_i|^2 |Y_i|^2 (1 - \cos^2 \angle (X_i,Y_i)) \leq 1.%
\end{equation*}
Thus, \eqref{eq_c1} implies $K(\Pi) \geq k_1 + k_2$, and so do \eqref{eq_c2}, \eqref{eq_c4} and \eqref{eq_c4}.
\end{proof}
}

\section*{Acknowledgements}

The first author thanks Alexander Pogromsky and Alexey Matveev for enlightening discussions about restoration entropy and its computation, as well as Jost Eschenburg and Peter Quast for answering his questions about the geometry of $\SC^+_n$.%

\end{document}